\documentclass{amsart}

\usepackage{amssymb, graphicx}
\usepackage{stmaryrd}
\usepackage[all]{xy}
\usepackage{xcolor}
\usepackage{hyperref}


\newtheorem{thm}{Theorem}[section]

\newtheorem{prop}[thm]{Proposition}

   
\theoremstyle{definition}
\newtheorem{defn}[thm]{Definition}

\newtheorem{exmp}[thm]{Example}

\newtheorem{ques}[thm]{Question}    

\newtheorem{rem}[thm]{Remark}          

\newtheorem*{ack}{Acknowledgment}      
\newtheorem{notation}[thm]{Notation}   
  
\newtheorem{defn-thm}[thm]{Definition--Theorem}  
\newtheorem{defn-lem}[thm]{Definition--Lemma}  

\theoremstyle{remark}


\setcounter{section}{0}



\newcommand{\rank}[0]{\operatorname{rank}}

\newcommand{\coker}[0]{\operatorname{coker}}

\newcommand{\sing}[0]{\operatorname{Sing}}

\newcommand{\pf}[0]{\operatorname{Pf}}

\newcommand{\uc}[0]{\operatorname{uc}}

\def\loccoh#1.#2.#3.#4.{H^{#1}_{#2}(#3,#4)}

\DeclareMathAlphabet{\mathchanc}{OT1}{pzc}%
                                {m}{it}



\newcommand\scalemath[2]{\scalebox{#1}{\mbox{\ensuremath{\displaystyle #2}}}}

\begin{document}
\bibliographystyle{amsalpha}


\title[An explicit matrix factorization of small cubics]{An explicit matrix factorization of cubic hypersurfaces of small dimension}
\author{Yeongrak Kim}
\email{kim@math.uni-sb.de}

\author{Frank-Olaf Schreyer}
\email{schreyer@math.uni-sb.de}
\address{F. Mathematik und Informatik, Universit{\"a}t des Saarlandes, Campus E2.4, D-66123 Saarbr{\"u}cken, Germany}

\keywords{Matrix factorization, Ulrich module, cubic hypersurface, Shamash's construction, spinor variety, Cartan cubic}

\begin{abstract}
In this paper, we compute an explicit matrix factorization of a rank 9 Ulrich sheaf on a general cubic hypersurface of dimension at most 7, whose existence was proved by Manivel. Instead of using invariant theory, we use Shamash's construction with a cone over the spinor variety. We also describe an algebro-geometric interpretation of our matrix factorization which connects the spinor tenfold and the Cartan cubic.
\end{abstract}
\maketitle
\section{Introduction}
Let $S$ be a polynomial ring, and let $f \in S$ be a polynomial. A pair of matrices $(A,B)$ with entries in $S$ is called a matrix factorization of $f$  if $AB=BA=f \cdot Id$, where $Id$ is an identity matrix (of some size). It was introduced by Eisenbud \cite{Eis80} in the context of commutative algebra to study free resolutions over the hypersurface ring $R=S/(f)$. Several applications of matrix factorizations were discovered recently, for instance, a strong connection between the string theory as categories of $D$-branes for Landau-Ginzburg B-models \cite{KL04, Orl04}. 

In commutative algebra, there is an important connection between matrix factorizations and Cohen-Macaulay modules. Among them, we are particularly interested in a matrix factorization $(A,B)$ of a nonzero homogeneous polynomial $f \in S$ where every entry of $A$ is linear. When it exists, an $R$-module $M:=\coker \left( \oplus S(-1) \stackrel{A} \to \oplus S \right)$ has a completely linear $S$-resolution of length $1$. Such a module has a maximal number of generators (in degree $0$) it can have. It is called a maximally generated maximal Cohen-Macaulay module, or an \emph{Ulrich module}, to memorize a pioneering work of Ulrich \cite{Ulr84} and follow-ups. Eisenbud and Schreyer \cite{ESW03} introduced the notion of an Ulrich sheaf which is an analogous object defined in a geometric setting. Several remarkable applications, including representations of the Cayley-Chow form and the cone of cohomology tables, emphasize the importance of the study of Ulrich sheaves. 

In general, finding an Ulrich module supported on a given variety is not simple. Fortunately, it is well known that there is an Ulrich module supported on the hypersurface $V(f)$ for any homogeneous polynomial $f \in S$. Backelin and Herzog showed this existence by construction \cite{BH87} using Childs' analysis on the Roby-Clifford algebra \cite{Chi78}. However, their construction only provides an Ulrich module of huge rank, which seems to be very far away from the smallest possible rank in many cases. The smallest possible rank, called the \emph{Ulrich complexity} \cite{BES17}, contains a number of open problems. When $f$ defines a smooth  quadric hypersurface in $\mathbb{P}^n$, we know the exact answer: the only indecomposable Ulrich modules are the spinor modules (there are $1$ or $2$, depends on the parity of $n$) of rank $r=2^{\lfloor (n-2)/2 \rfloor}$ \cite{BEH87}. Except for smooth quadrics, only a few cases are explicitly understood, when the degree and the number of variables are very small \cite{Bea00}.

Hence, it is natural to ask the Ulrich complexity of a hypersurface cut out by a (very) general homogeneous cubic polynomial $f$ defined in $(n+1)$ variables $x_0, \cdots, x_n$. Let us recall the known cases over $\mathbb{C}$. When $f$ defines a curve or a surface, then it is classically well-known that $f$ is linearly determinantal, \emph{i.e.}, $f = \det A$ for some $3 \times 3$ linear matrix $A$. It is clear that such $A$ induces a matrix factorization of $f$ \cite[Section 5]{Eis80}, and thus presents an Ulrich module of rank $1$. When $X=V(f)$ is a general cubic threefold, then it is no more linearly determinantal but linearly Pfaffian, \emph{i.e.}, $f = \pf A$ for some $6 \times 6$ skew-symmetric linear matrix $A$. Similar as above, $A$ presents an Ulrich module of rank $2$, which gives the Ulrich complexity of $X$. 

Suppose that a smooth cubic fourfold $X=V(f)$ supports an Ulrich module of rank $2$. Then it is linearly Pfaffian, and such a cubic fourfold always contains a del Pezzo surface of degree $5$ \cite{Bea00}. In particular, such cubic fourfolds form a divisor in the space of cubic fourfolds, and hence the Ulrich complexity of a (very) general cubic fourfold is at least $3$. Indeed, the Ulrich complexity of a general cubic form $f$ in $n+1$ variables is not exactly known when $n \ge 5$.

Very recently, Manivel showed the existence of rank $9$ Ulrich sheaves on a general cubic hypersurface of small dimensions:
\begin{thm}[\cite{IM14, Man19}, see also Theorem \ref{Prop:Rk9UlrichExistence}]
There is an Ulrich sheaf of rank $9$ on a general cubic hypersurface of dimension at most $7$.
\end{thm}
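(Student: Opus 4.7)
The plan is to construct a single universal Ulrich sheaf on a large cubic hypersurface and transfer it to a general cubic of dimension $n \le 7$ by realizing the latter as a transverse linear section. The natural universal object is the Cartan cubic $V(f_{E_6}) \subset \p^{26}$, the zero locus of the unique (up to scalar) $E_6$-invariant cubic form on the $27$-dimensional minuscule representation $\c^{27}$; its singular locus is the Cayley plane $E_6/P_1$. The target rank $9$ is motivated by the Jordan-algebraic description of $\c^{27}$ as the complexification of the algebra of Hermitian $3 \times 3$ octonionic matrices with $f_{E_6}$ realized as the determinant, so that a linear matrix factorization of $f_{E_6}$ has size $9 \cdot 3 = 27$, matching the ambient $\p^{26}$.

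The first step is to build an $E_6$-equivariant Ulrich sheaf $\mathcal{F}$ of rank $9$ on $V(f_{E_6})$. A natural candidate is the pushforward to $V(f_{E_6})$ of an equivariant homogeneous bundle from a resolution of singularities, for instance the blow-up along the Cayley plane $E_6/P_1$. The defining vanishing conditions $H^i(V(f_{E_6}), \mathcal{F}(-j)) = 0$ for $1 \le j \le 7$ and all $i$ can then in principle be verified by Borel--Weil--Bott applied to the $E_6$-equivariant data on the resolution, together with a careful analysis of the contribution of the exceptional divisor under pushforward.

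The second step is a dimension count. The map sending a linear embedding $\iota : \p^{n+1} \hookrightarrow \p^{26}$ to the projective cubic $[\iota^* f_{E_6}] \in \p(\text{Sym}^3(\c^{n+2})^*)$ has source of dimension $27(n+2)$, and its generic fibre has dimension $79$, the dimension of the stabilizer of $[f_{E_6}]$ in $\GL_{27}$, namely $E_6 \cdot \gm$. For $n = 7$ the image dimension is at most $27 \cdot 9 - 79 = 164 = \binom{11}{3} - 1 = \dim \p(\text{Sym}^3(\c^9)^*)$, and equality forces dominance; for $n = 8$ the analogous count $270 - 79 = 191$ falls short of $\binom{12}{3} - 1 = 219$, giving the structural reason for the bound $n \le 7$. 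Consequently, for $n \le 7$ a general cubic $V(g) \subset \p^{n+1}$ arises as a transverse linear section $V(f_{E_6}) \cap L$, and the restriction $\mathcal{F}\resto{V(g)}$ is then an Ulrich sheaf of rank $9$ on $V(g)$ by the standard fact that Ulrich sheaves restrict to Ulrich sheaves under generic linear sections.

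The main obstacle is the first step: the explicit construction and Ulrich verification of $\mathcal{F}$ on $V(f_{E_6})$, which is precisely where the original proof invokes representation-theoretic / invariant-theoretic machinery. A way to sidestep the full $E_6$-equivariant picture is to work instead with a smaller, combinatorially more explicit linear section such as the $D_5$-invariant spinor cubic $\sigma(S_5) \subset \p^{15}$ (the secant variety of the spinor tenfold $S_5$), and to produce the matrix factorization there first via Shamash's construction applied to an appropriate module supported on the cone over $S_5$, before transferring to a general cubic of dimension $\le 7$ through a combination of the linear-section argument above and the equivariant inclusion $\sigma(S_5) \subset V(f_{E_6})$.
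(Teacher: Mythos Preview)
Your first two steps are precisely Manivel's original argument, which the paper cites rather than reproduces; you correctly identify that the hard part is verifying the Ulrich condition for $\mathcal{F}$ on the Cartan cubic, and you do not actually carry this out. So as it stands the proposal is a plan rather than a proof, and the plan coincides with the invariant-theoretic route the paper is explicitly trying to bypass.

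Your suggested workaround in the last paragraph contains a genuine geometric error: there is no ``$D_5$-invariant spinor cubic'' $\sigma(\mathcal S_{10})\subset\p^{15}$. The spinor tenfold $\mathcal S_{10}\subset\p^{15}$ has secant variety equal to all of $\p^{15}$; it is not one of the Severi varieties, and no cubic hypersurface in $\p^{15}$ plays the role you want. So Shamash's construction cannot be run on a cone over $\mathcal S_{10}$ inside a cubic in $\p^{15}$.

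The paper's own proof (Theorem~\ref{Prop:Rk9UlrichExistence}) fixes this by changing the ambient space. One shows that a general cubic sevenfold $X\subset\p^{8}$ contains a Mukai threefold $Z$ of genus $7$, i.e.\ a codimension-$7$ linear section of $\mathcal S_{10}\subset\p^{15}$. This uses your dimension count (a general cubic sevenfold is a linear section of the Cartan cubic) together with the fact, from \cite{IM14}, that a general $\p^{15}\subset\p^{26}$ is the span of a spinor tenfold lying on the Cartan cubic. Since $Z$ inherits the Betti table of $\mathcal S_{10}$ (pure of type $0,2,3,5,6,8$), Shamash's construction applied to $Z\subset X$ produces directly a $27\times 27$ linear matrix factorization of the defining cubic, hence a rank-$9$ Ulrich sheaf on $X$, with no equivariant cohomology computation needed. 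Alternatively (and this is closer in spirit to your last paragraph), one can embed a cone over $\mathcal S_{10}$ not in $\p^{15}$ but in the universal cubic $V(F)\subset\p^{25}$ obtained by pairing the ten quadric generators of $\mathcal S_{10}$ with ten new linear forms; Shamash then yields the Hessian matrix factorization of the Cartan cubic upon adding one more variable.
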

Using invariant theory, he found an $E_6$-equivariant linear map whose cokernel is supported on the Cartan cubic hypersurface in $\mathbb{P}^{26}$. In particular, this gives a rank 9 Ulrich sheaf $\mathcal E$ on the Cartan cubic. Since a general cubic hypersurface of dimension at most $7$ can be obtained as a general linear section of the Cartan cubic \cite[Proposition 2.2]{IM14}, the restriction of $\mathcal E$ will be an Ulrich sheaf of the same rank on a general linear section \cite[Lemma 2.4]{CH12}. Although the construction is clear, the paper does not focus on an explicit description of the matrix factorization he obtained. 

Hence, the purpose of this note is to reprove Manivel's result by addressing an explicit matrix factorization which provides an Ulrich sheaf of rank $9$. We use Shamash's construction to compute such a matrix factorization instead of invariant theory. Surprisingly, two different ideas intersect on geometry of the Cartan cubic, since it is possible to recover the Cartan cubic from the spinor variety in a clear way (= Theorem \ref{prop:RecoverCartanCubic}). Consequently, a matrix factorization of a general cubic hypersurface of dimension at most $7$ can be obtained as a restriction of a matrix factorization of the Cartan cubic.

The structure of the paper is as follows. In Section \ref{Sect:Preliminaries}, we recall some basic notions and helpful results on Ulrich sheaves and Shamash's construction. In Section \ref{Sect:MFCubic}, we compute an explicit matrix factorization of a general cubic hypersurface of dimension at most $7$, which corresponds to an Ulrich module of rank $9$ as an application of Shamash's construction. And then, we reconstruct the Cartan cubic from the spinor tenfold $\mathcal S_{10} \subset \mathbb{P}^{15}$ to observe a connection between two different constructions of Ulrich modules. In fact, the Hessian matrix of the Cartan cubic form induces a matrix factorization of the Cartan cubic (see also \cite{Abu18}), which is compatible with our computation. 

\section{Preliminaries}\label{Sect:Preliminaries}
We briefly recall some preliminaries which appear in the whole paper. We work over the field $\mathbb{C}$ of complex numbers to fit with the classical setting, however, most of computations can be done in a similar way over an algebraically closed field of characteristic $\neq 2, 3$. 

\begin{notation}
Throughout the paper, we use the following notations. 
\[
\begin{array}{rl}
k  = \mathbb{C} & \text{the complex number field}; \\
X \subset \mathbb{P}^n & \text{a connected projective variety, mostly a cubic hypersurface}; \\ 
S=k[x_0, \cdots, x_n] & \text{the homogeneous coordinate ring of } \mathbb{P}^n; \\
S_X=S/I_X & \text{the homogeneous coordinate ring of } X; \\
\end{array}
\]
\end{notation}

\begin{defn}[See also {\cite[Proposition 2.1]{ESW03}}]
A coherent sheaf $\mathcal E$ supported on $X$ is called an \emph{Ulrich sheaf} if its twisted section module $\Gamma_* (\mathcal E)$ is an \emph{Ulrich module}, that is, the minimal $S$-free resolution 
\[
F_{\bullet} : 0 \to F_c \to \cdots \to F_1 \to F_0 \to \Gamma_* (\mathcal E) \to 0
\]
of $\Gamma_{*}(\mathcal E)$ is completely linear, in the sense that $F_i \simeq \oplus S(-i)$ is generated in degree $i$ for every $0 \le i \le c=n- \dim X$. 
\end{defn}
In particular, an Ulrich module is a maximal Cohen-Macaulay module which has a completely linear $S$-resolution. Since the sheaf associated to an Ulrich module is an Ulrich sheaf, we will not distinguish these notions. 

Recall that the Ulrich complexity is defined to be the smallest rank of Ulrich sheaves on $X$, denoted by $\uc(X)$. The most important question, suggested first by several commutative algebraist, and whose positive answer is nowadays called a conjecture of Eisenbud and Schreyer, is:

\begin{ques}[\cite{ESW03, ES11}]
Does every $X$ support an Ulrich sheaf, \emph{i.e.}, $\uc(X)<\infty$?
\end{ques}

When $X$ carries an Ulrich sheaf, then the cone of cohomology tables for $X$ is identical to the cone of the cohomology tables for the projective space of the same dimension $\mathbb{P}^{\dim X}$, regardless of the rank of an Ulrich sheaf we chose \cite[Theorem 4.2]{ES11}. However, in practice, we are much interested in Ulrich sheaves of smaller rank as possible. 

It is worthwhile to recall how matrix factorizations and Ulrich sheaves are related before proceed. Let $X=V(f) \subset \mathbb{P}^n$ be a hypersurface cut out by a nonzero homogeneous form $f$ of degree $d$. It is well-known that a matrix factorization $(A,B)$ of $f$ induces a maximal Cohen-Macaulay module supported on $X$ by $\coker A$. Conversely, if we have a maximal Cohen-Macaulay $S_X=S/(f)$-module, one has a matrix $A$ by reading off its minimal free resolution of length 1. Such a matrix $A$ forms a part of a matrix factorization of $f$, \emph{i.e.}, there is a unique matrix $B$ such that $AB=BA=f \cdot Id$. Indeed, there is a bijection between the isomorphism classes of maximal Cohen-Macaulay modules and the equivalence classes of matrix factorizations of $f$ \cite[Section 5, 6]{Eis80}. In particular, an Ulrich module on the hypersurface ring $S_X$ gives a matrix factorization $(A,B)$ of $f$ where $A$ is its presentation matrix whose entries are linear forms, and vice versa. Since $A$ determines $B$ (and $B$ also determines $A$), we sometimes call $A$ is a matrix factorization of $f$ (or, of $X$) for simplicity. We refer to \cite{Eis80} for more details on the matrix factorization. 

Questions on the Ulrich complexity are much more mysterious even for hypersurfaces. The first open question on the Ulrich complexity of a general hypersurface is:
\begin{ques}
What is the Ulrich complexity of a general cubic fourfold in $\mathbb{P}^5$?
\end{ques}

Note that the construction in \cite{BH87} provides an upper bound.  Since the Chow rank of a general cubic form $f$ in $6$ variables is $4$ \cite[Corollary 5.2]{Abo14}, which means, $f$ can be written as a sum of $4$ completely decomposable forms
\[
f = \ell_{1,1} \ell_{1,2} \ell_{1,3}+ \ell_{2,1} \ell_{2,2} \ell_{2,3}+ \ell_{3,1} \ell_{3,2} \ell_{3,3}+ \ell_{4,1} \ell_{4,2} \ell_{4,3}
\]
where each $\ell_{i,j}$ is linear. Following the arguments in \cite{Chi78, BH87, BES17}, there are $27 \times 27$ linear matrices $A_1, A_2, A_3$ such that $A_1 A_2 A_3 = A_2 A_3 A_1 = A_3 A_1 A_2 = f \cdot Id_{27}$. In particular, $(A_1, A_2 A_3)$ is a matrix factorization of $f$, and hence a general cubic fourfold always carries an Ulrich sheaf of rank $9 = 27/3$. 

Unfortunately, this construction only provides an upper bound which is quite far from the Ulrich complexity in so many cases. For instance, the Chow rank of a general cubic form in $7$ variables jumps to $5$. In the case, the above argument yields a matrix factorization by a $81 \times 81$ linear matrix, which defines an Ulrich sheaf of rank $27$. On the other hand, Manivel's observation implies that there is an Ulrich sheaf of rank $9$ on a general cubic sevenfold \cite[Corollary 2.3]{Man19}, and hence $\uc (X) \le 9 < 27$ when $X$ is a general cubic hypersurface of dimension at most $7$.

Let us describe a little more details on the Ulrich complexity of a very general cubic fourfold. Let $X$ be a very general cubic fourfold in $\mathbb{P}^5$, and let $\mathcal E$ be an Ulrich sheaf on $X$. Note that $\mathcal E$ is Ulrich if and only if $H^i (X, \mathcal E(-j))=0$ for every $i, 1 \le j \le 4 = \dim X$ \cite[Section 2]{ESW03}. In particular, $\mathcal E$ has no intermediate cohomology. Since $X$ is smooth, any coherent sheaf without intermediate cohomology on $X$ is locally free. Indeed, any Ulrich sheaf on $X$ is locally free, and hence we may read off its cohomology from the Riemann-Roch formula. Also note that intersection theory on $X$ is determined by multiples (by a rational number) of codimension $i$ cycles $H^i$, where $H \subset X$ denotes the general hyperplane section of $X$ so that $H^4=3$. From the short exact sequence
\[
0 \to \mathcal T_X \to \mathcal T_{\mathbb{P}^5}|_X \to \mathcal N_{X/\mathbb{P}^5} = \mathcal O_X (3) \to 0,
\]
we deduce the Riemann-Roch formula for $X$
\[
\chi(\mathcal E) = \left[ ch(\mathcal E) \cdot \left(1 + \frac{3}{2}H + \frac{5}{4}H^2 + \frac{3}{4} H^3 + \frac{1}{3}H^4 \right) \right]_4.
\]

For simplicity, we compute the Chern classes of $\mathcal F = \mathcal E(-1)$ instead of $\mathcal E$. Let us denote $c_i = c_i (\mathcal F), r = \rank (\mathcal E) = \rank (\mathcal F)$. Since $\chi(\mathcal F) = \chi(\mathcal F(-1))= \chi(\mathcal F(-2))= \chi(\mathcal F(-3))=0$ \cite[Proposition 2.1]{ESW03}, we conclude that
\[
c_1 = 0,\ c_2  = \frac{r}{3} H^2,\ c_3 = 0, \ c_4 = \left( \frac{1}{18}r^2 - \frac{1}{2}r \right) H^4 = \frac{1}{6}r^2 - \frac{3}{2} r. 
\]

\begin{prop}\label{prop:lowerboundUCbyRiemannRoch}
Let $\mathcal E$ be an Ulrich sheaf of rank $r$ on a very general cubic fourfold $X \subset \mathbb{P}^5$. Then $r$ is divisible by $3$ and $r \ge 6$.
\end{prop}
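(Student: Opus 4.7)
The plan is to squeeze the integer $r$ between two constraints coming from the Chern classes of $\mathcal F = \mathcal E(-1)$ computed in the excerpt, namely $c_1=0$, $c_2=\tfrac{r}{3}H^2$, $c_3=0$, and (after integrating over $X$) $c_4=\tfrac{r^2}{6}-\tfrac{3r}{2}$. The divisibility statement will come from Hodge theory on a very general cubic fourfold, and the inequality $r\ge 6$ will come from the elementary vanishing of Chern classes above the rank.

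First I would record, as in the excerpt, that any Ulrich sheaf on the smooth $X$ has no intermediate cohomology and is therefore locally free, so its Chern classes are honest classes and in particular $c_i(\mathcal F)=0$ for every $i>\rank\mathcal F=r$. Next I would invoke the Noether--Lefschetz type statement for cubic fourfolds: for a very general $X\subset\mathbb{P}^5$ the lattice of integral Hodge $(2,2)$-classes is $H^{2,2}(X,\mathbb{Z})=\mathbb{Z}\cdot H^2$ (this is the complement, in the moduli of cubic fourfolds, of countably many Hassett divisors). Since $c_2(\mathcal F)$ is an algebraic $(2,2)$-class, it must lie in $\mathbb{Z}\cdot H^2$. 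Combined with the computed value $c_2=\tfrac{r}{3}H^2$, this forces $3\mid r$.

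For the second half I would argue by contradiction: if $r=3$ then $\mathcal F$ is a rank $3$ vector bundle on a $4$-fold, so $c_4(\mathcal F)=0$ identically. But plugging $r=3$ into the formula from the excerpt gives
\[
c_4(\mathcal F)=\tfrac{1}{6}\cdot 9-\tfrac{3}{2}\cdot 3=-3\ne 0,
\]
a contradiction. Hence $r\ne 3$, and since $r$ is a positive multiple of $3$ we conclude $r\ge 6$.

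The only genuinely non-routine ingredient is the Hodge-theoretic input $H^{2,2}(X,\mathbb{Z})=\mathbb{Z}H^2$ for very general $X$; everything else is a direct consequence of the Riemann--Roch calculation already in the excerpt together with the vanishing of Chern classes above the rank. I would therefore spend most of the write-up making the Hodge statement precise (and contrasting it with the merely "general" case, where special classes such as those supported on a del Pezzo surface of degree $5$ in $X$ can appear and would weaken the conclusion), then finish with the one-line integrality and rank-vs-Chern-class arguments sketched above.
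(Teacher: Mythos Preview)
Your proposal is correct, and the argument for $r\neq 3$ is identical to the paper's. The only difference is how you extract the divisibility $3\mid r$. The paper does not invoke the integral statement $H^{2,2}(X,\mathbb{Z})=\mathbb{Z}H^2$; it uses only the rational version (that every algebraic class on a very general $X$ is a rational multiple of a power of $H$) to run the Riemann--Roch computation, and then reads off $3\mid r$ from the fact that $c_4(\mathcal F)=\tfrac{1}{6}r^2-\tfrac{3}{2}r$ must be an integer in $H^8(X,\mathbb{Z})\cong\mathbb{Z}$. Your route through $c_2$ is a bit more direct but leans on the integral Noether--Lefschetz input (and implicitly on $H^2$ being primitive in $H^4(X,\mathbb{Z})$), whereas the paper's route needs only the rational statement plus the trivial fact that top-degree Chern numbers are integers. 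In the end both arguments rest on the same ``very general'' hypothesis, so the distinction is largely cosmetic.
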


\begin{proof}
Since $c_4 (\mathcal E(-1)) = \frac{1}{6}r^2 - \frac{3}{2}r$ is an integer, $r$ must be divisible by $3$. Suppose that there is an Ulrich sheaf $\mathcal E$ of rank $3$. The cohomology condition forces that the value $\left(\frac{1}{6} r^2 - \frac{3}{2}r \right) = -3$ must be different from zero, however, $c_4$ must be zero since $\mathcal E$ is of rank $3$. Hence, we conclude that a rank 3 Ulrich bundle cannot exist on $X$.
\end{proof}

To combine both observations, the Ulrich complexity of a very general cubic fourfold is either $6$ or $9$. Unfortunately, we do not know yet whether there is an Ulrich sheaf of rank $6$ on a very general cubic fourfold, or not. On the other hand, it is much easier to find an Ulrich sheaf of rank 9 on a very general cubic fourfold (and hypersurfaces of dimension $\le 7$). Therefore, it is worthwhile to study Ulrich sheaves of rank $9$ on a (very) general cubic hypersurface of small dimensions. Note that the Ulrich complexity of such a cubic is strongly expected to be $9$. 

\begin{rem}
The same argument implies that there is no Ulrich sheaf of rank $6$ on a general cubic hypersurface of dimension $\ge 8$. Manivel found a family of smooth cubic eightfolds having an Ulrich sheaf of rank $9$ \cite[Proposition 2.2]{Man19}, namely, smooth linear sections of the Cartan cubic hypersurfaces. However, it is not clear that a general cubic eightfold can have an Ulrich sheaf of rank $9$. 
\end{rem}

To obtain an explicit presentation of such an Ulrich sheaf (equivalently, a matrix factorization), Shamash's construction plays a significant role throughout the rest of the paper. Let us briefly recall Shamash's construction. 
Let $Z$ be a subscheme contained in a hypersurface $X = V(f) \subset \mathbb{P}^{n}$ of degree $d$. Let $S=k[x_0, \cdots, x_{n}], R=S_X,$ and $S_Z$ be the coordinate rings of $\mathbb{P}^{n}, X, Z$ respectively. Let $F_{\bullet}$ be the minimal free $S$-resolution of $S_Z$. Since $Z \subset X$, we have a right exact sequence
\[
F_1 \otimes_S R \to F_0 \otimes_S R \simeq R \to S_Z \to 0,
\]
hence, there is a free $R$-resolution of $S_Z$ (possibly non-minimal)
\[
\cdots \to G_4 \oplus G_2(-d) \oplus G_0 (-2d) \to G_3 \oplus G_1(-d) \to G_2 \oplus G_0(-d) \to G_1 \to G_0 \to S_Z \to 0
\]
where $G_i = F_i \otimes_S R$. The resolution becomes eventually 2-periodic after a finite number of steps, and hence induces a matrix factorization of $f$. Such a matrix factorization provides a presentation matrix of an ACM sheaf on $X$ \cite[Corollary 6.3]{Eis80}. Since an Ulrich sheaf on $X$ corresponds to a matrix factorization $(A, B)$ of $f$ such that all the entries of $A$ are linear forms (and thus the entries of B are degree $(d-1)$-forms), one may obtain a presentation of an Ulrich sheaf when $S_Z$ has a pure resolution whose differentials have repeating degrees: $1$, $d-1$, then again by $1$, and so on. 

\begin{exmp}
A few easy examples can be easily found via Boij-S{\"o}derberg theory. Note that length 2 pure resolutions consist of degrees $0, 2, 3$ are multiples of 
\[
\begin{array}{ccc}
1 & - & - \\
- & 3 & 2 
\end{array}
\]
which is the Betti table of varieties of minimal degree of codimension $2$. For instance, a twisted cubic has the above Betti table. Since every smooth cubic surface in $\mathbb{P}^3$ contains such a twisted cubic, Shamash's construction provides a matrix factorization of the cubic surface by a $3 \times 3$ linear matrix. In other words, a smooth cubic surface always carries an Ulrich line bundle.

Let us consider the next case. Length 3 pure resolutions consists of degrees $0, 2, 3, 5$ are multiples of
\[
\begin{array}{cccc}
1 & - & - & - \\
- & 5 & 5 & - \\
- & - & - & 1
\end{array}
\]
which is the Betti table of del Pezzo varieties of codimension $3$ and degree $5$. For instance, a smooth cubic threefold contains an elliptic normal curve of degree 5, and a Pfaffian cubic fourfold contains a del Pezzo surface of degree 5. In both cases, Shamash's construction provides a $6 \times 6$ matrix which gives a matrix factorization of such a cubic hypersurface, and thus there is an Ulrich bundle of rank $2 = \frac{6}{3}$.
\end{exmp}

\begin{rem}
Shamash's construction possibly contains a cancellation. Let $C$ be a general 6-gonal curve of genus 10, and $D$ be a general $\mathfrak{g}_6^{1}$ on $C$. The linear system $|\omega_C(-D)|$ embeds $C$ into $\mathbb{P}^4$, with the following Betti table
\[
\begin{array}{cccc}
1 & - & - & - \\
- & - & - & - \\
- & 8 & 9 & - \\
- & - & - & 2
\end{array}
\]
(cf. Appendix of \cite{CH12} by Geiss and Schreyer). Applying Shamash's construction for a smooth cubic threefold $X$ containing $C$, we get the following non-minimal $R$-resolution 
\[
\cdots \to R(-7)^{\oplus 9} \oplus R(-6) \to R(-6)^{\oplus 10} \to R(-4)^{\oplus 9} \oplus R(-3) \to R(-3)^{\oplus 8} \to R \to S_C \to 0
\]
where $R$ is the homogeneous coordinate ring $S_X$ of $X$. Since the equation defining $X$ is contained in the $8$-dimensional vector space  $H^0(\mathcal I_C(3))$, a cancellation occurs: a few first terms of $R$-minimal resolution of $\mathcal S_C$ are indeed
\[
\cdots \to R(-6)^{\oplus 9} \to R(-4)^{\oplus 9} \to R(-3)^{\oplus 7} \to R \to S_C \to 0.
\]
This cancellation allows us to take a linear submatrix $R(-7)^{\oplus 9} \to R(-6)^{\oplus 9}$ which still induces a matrix factorization of $X$. As a consequence, we have a matrix factorization of $X$ by a $9 \times 9$ linear matrix which defines an Ulrich bundle of rank 3 on $X$.
\end{rem}

\section{Computing a matrix factorization}\label{Sect:MFCubic}

Using invariant theory, Manivel \cite[Corollary 2.3]{Man19} showed that the Cartan cubic hypersurface $\mathcal C \subset \mathbb{P}^{26}$ supports an Ulrich sheaf of rank $9$. By restricting onto a general linear section, we have a number of cubic hypersurfaces with a rank $9$ Ulrich sheaf. Since $\mathcal C$ is $E_6$-invariant, and the restriction map $\varPsi : Gr(9,27) // E_6 \dashrightarrow |\mathcal O_{\mathbb{P}^8}(3)| // PGL(8)$ is dominant \cite[Proposition 2.2]{IM14}, a general cubic sevenfold $X$ can be identified as a linear section of the Cartan cubic $\mathcal C \subset \mathbb{P}^{26}$. In particular, a general cubic sevenfold has a rank $9$ Ulrich sheaf. 

We first give an alternative proof of the existence of rank $9$ Ulrich sheaves on a general cubic sevenfold, as a quick application of Shamash's construction. 

\begin{thm}\label{Prop:Rk9UlrichExistence}
There is an Ulrich sheaf of rank $9$ on a general cubic hypersurface in $\mathbb{P}^{n}$ when $n \le 8$. 
\end{thm}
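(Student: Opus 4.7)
The plan is to apply Shamash's construction to a cubic sevenfold $X \subset \mathbb{P}^8$ that contains a suitable subvariety $Z$ derived from the spinor tenfold $\mathcal S_{10} \subset \mathbb{P}^{15}$. First, I would reduce to the case $n = 8$: for $n \le 7$, a general cubic of dimension $n-1$ is a general linear section of a general cubic sevenfold, and by \cite[Lemma 2.4]{CH12} (as invoked in the introduction) the restriction of an Ulrich sheaf to a general linear section is again Ulrich, so it suffices to construct a rank $9$ Ulrich sheaf on a general cubic sevenfold in $\mathbb{P}^8$.

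The spinor tenfold $\mathcal S_{10} \subset \mathbb{P}^{15}$ is arithmetically Gorenstein of codimension $5$, cut out by $10$ quadrics, with self-dual minimal $S$-resolution
\[
0 \to S(-8) \to S(-6)^{10} \to S(-5)^{16} \to S(-3)^{16} \to S(-2)^{10} \to S \to S_{\mathcal S_{10}} \to 0.
\]
Take $Z \subset \mathbb{P}^8$ to be a general linear section of $\mathcal S_{10}$ by a generic $\mathbb{P}^8 \subset \mathbb{P}^{15}$; by arithmetic Cohen--Macaulayness and Bertini, $Z$ is a smooth $3$-fold whose minimal $S$-resolution with $S = k[x_0,\ldots,x_8]$ has the identical Betti table. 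The internal-degree sequence $0, 2, 3, 5, 6, 8$, jumping alternately by $2$ and $1$, is precisely the pattern Shamash's construction requires in order to produce an Ulrich matrix factorization over a cubic.

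A parameter count on the incidence variety of pairs $(Z, X)$ with $Z \subset X$ then shows that the forgetful map to $|\mathcal O_{\mathbb{P}^8}(3)|$ is dominant, so a general cubic sevenfold $X = V(f) \subset \mathbb{P}^8$ contains such a $Z$. Tensoring the resolution above with $R = S/(f)$ and correcting inductively by the $f$-nullhomotopy on $F_\bullet$, the $R$-resolution of $S_Z$ becomes $2$-periodic; in each stable spot the three contributing pieces $F_k \oplus F_{k-2}(-3) \oplus F_{k-4}(-6)$ (for $k = 4, 5$) assemble into a free $R$-module of rank $1 + 16 + 10 = 27$, yielding
\[
\cdots \to R(-11)^{27} \xrightarrow{B'} R(-9)^{27} \xrightarrow{A} R(-8)^{27} \xrightarrow{B} R(-6)^{27} \to \cdots
\]
with $A$ linear and $B, B'$ quadratic, satisfying $AB = BA = f \cdot \mathrm{Id}_{27}$. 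The cokernel of $A$ is then an Ulrich module on $X$ of rank $27/3 = 9$.

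The main obstacles I anticipate are the dimension count verifying that a general cubic sevenfold contains such a $Z$, and ruling out hidden cancellations in the Shamash cascade that would collapse the periodic rank below $27$. The self-duality of the resolution of $\mathcal S_{10}$ and the arithmetic identity $1 + 16 + 10 = 10 + 16 + 1 = 27$ provide the structural reason both strands of the cascade balance at the same rank $27$; this recovers, via Shamash's construction rather than invariant theory, exactly the rank $9$ Ulrich sheaf Manivel obtains using the $E_6$-action on the Cartan cubic.
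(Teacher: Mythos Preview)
Your overall strategy matches the paper's: reduce to $n=8$, take a Mukai threefold $Z$ of genus $7$ (a general $\mathbb{P}^8$-section of $\mathcal S_{10}$) inside the cubic sevenfold $X$, and run Shamash's construction to extract the $27\times 27$ linear matrix factorization. Your Shamash bookkeeping is correct, and the worry about hidden cancellations is unnecessary here: the internal degrees $0,2,3,5,6,8$ force every block of every Shamash differential (both the original $\partial_i$ and the nullhomotopies $\sigma_i$) to have strictly positive degree, so no unit entries can occur and the periodic tail is automatically minimal with the two ranks balancing at $1+16+10=27$.

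The one substantive gap is in the dominance step. You assert that ``a parameter count on the incidence variety'' shows the projection $\{(Z,X):Z\subset X\}\to|\mathcal O_{\mathbb P^8}(3)|$ is dominant, but a dimension inequality alone does not yield this; you would still need, for instance, a tangent-space surjectivity argument at a specific pair or a vanishing of the type $H^1(N_{Z/X})=0$. The paper sidesteps the count entirely by appealing to the geometry of the Cartan cubic: a general cubic sevenfold is a linear section $X=\mathcal C\cap\Lambda$ with $\Lambda\cong\mathbb P^8$ \cite[Proposition~2.2]{IM14}, and a general $\mathbb P^{15}\subset\mathbb P^{26}$ through $\Lambda$ is the linear span of a copy of $\mathcal S_{10}$ lying on $\mathcal C$ \cite[Lemma~5.2]{IM14}; intersecting that $\mathcal S_{10}$ with $\Lambda$ produces the required $Z\subset X$ directly. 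So your outline is correct, but the paper's route via the Cartan cubic replaces the unfinished incidence argument with a clean geometric containment.
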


\begin{proof}
Since the restriction of an Ulrich sheaf onto a general hyperplane section is again Ulrich \cite[Lemma 2.4]{CH12}, it is enough to show that a general cubic sevenfold $X$ carries an Ulrich sheaf of rank 9. We first claim that $X$ contains a $3$-dimensional subscheme $Z$ having the following Betti table
\[
\begin{array}{cccccc}
1 & - & - & - & - & - \\
- & 10 & 16 & - & - & - \\
- & - & - & 16 & 10 & - \\
- & - & - & - & - & 1
\end{array}
\]
Note that the Betti table of a Mukai threefold of genus $7$ has the shape above, which can be obtained as a linear section of the spinor tenfold $\mathcal S_{10} \subset \mathbb{P}^{15}$. Also note that $X$ can be obtained as a linear section of the Cartan cubic $\mathcal C \subset \mathbb{P}^{26}$. Let $\Lambda \cong \mathbb{P}^8$ be a general linear subspace so that $X = \mathcal C \cap \Lambda$, and consider a general linear subspace of dimension $15$ containing $\Lambda$. By \cite[Lemma 5.2]{IM14}, any general $\mathbb{P}^{15} \subset \mathbb{P}^{26}$ is the linear span of some spinor tenfold contained in $\mathcal C$, so we denote this spinor tenfold again by $\mathcal S_{10} \subset \mathbb{P}^{15}$. Hence, a general cubic sevenfold $X$ contains a Mukai threefold $Z := \mathcal S_{10} \cap \Lambda \subset \mathcal C \cap \Lambda = X$ of genus $7$.

Let $S=k[x_0, \cdots, x_8], R=S_X$, and $S_Z$ be the coordinate rings of $\Lambda = \mathbb{P}^8, X, Z$, respectively. We apply Shamash's construction with $Z \subset X$. Note that the minimal free $S$-resolution of $S_Z$ is given by
\[
F_{\bullet} : 0 \to S(-8) \to S(-6)^{\oplus 10} \to S(-5)^{\oplus 16} \to S(-3)^{\oplus 16} \to S(-2)^{\oplus 10} \to  S \to S_Z \to 0.
\]
Let $G_i = F_i \otimes_S R$. We have an $R$-resolution of $S_Z$ of the following form:
\[
\cdots \to G_4  \oplus G_2 (-3) \oplus G_0 (-6) \to G_3 \oplus G_1(-3) \to G_2 \oplus G_0 (-3) \to G_1 \to G_0 \to S_Z \to 0.
\]
In particular, we have a linear map 
\[
d_6 : G_6 \oplus G_4(-3) \oplus G_2(-6) \oplus G_0(-9) \simeq R(-9)^{\oplus 27} \to G_5 \oplus G_3(-3) \oplus G_1 (-6) \simeq R(-8)^{\oplus 27}
\]
which forms a matrix factorization $(d_6, d_5)$ of $X$, \emph{i.e.}, $d_6$ gives a presentation of an Ulrich sheaf of rank $9$ on $X$.
\end{proof}

\begin{rem}
The same method also works for hypersurfaces of any degree. For instance, let $M$ be a generic skew-symmetric $7 \times 7$ matrix whose entries are linear forms, and let $Z$ be the variety generated by seven cubics which are 6-Pfaffians of $M$. Then $Z$ has the following Betti table
\[
\begin{array}{cccc}
1 & - & - & - \\
- & - & - & - \\
- & 7 & 7 & - \\
- & - & - & - \\
- & - & - & 1
\end{array}
\]
In particular, $Z$ has degree $14$ and codimension $3$. 

When we play this game with $5$ variables, $Z \subset \mathbb{P}^4$ will be a curve of degree $14$ and genus $15$ having the same Betti table above. By a deformation theoretic argument (cf. Appendix of \cite{CH12}), one can check that there is a dominating family $\mathfrak F$ of such curves, that is, the natural projection from the incidence scheme over the quartic threefolds $\{(C,X) \  | \ C \in \mathfrak F, C \subset X \in | \mathcal O_{\mathbb{P}^{4}}(4)|\} \to |\mathcal O_{\mathbb{P}^4}(4)| \simeq \mathbb{P}^{69}$ is dominant. For such a pair $(C,X)$, Shamash's construction provides an Ulrich sheaf of rank $2$ on $X$. In other words, a general quartic threefold in $\mathbb{P}^4$ is Pfaffian since it supports a rank $2$ Ulrich sheaf \cite[Proposition 8.5]{Bea00}. 

As the next case, let us consider a surface $Z \subset \mathbb{P}^5$ with the above Betti table. A computer-based computation claims that the incidence scheme of surfaces $Z \subset X$ contained in a quartic fourfold $X$ has local dimension $111$ at a randomly chosen point, hence cannot dominate the space of quartic fourfolds $|\mathcal O_{\mathbb{P}^5}(4)| \simeq \mathbb{P}^{125}$. Note that a general quartic fourfold is not Pfaffian \cite[Section 9]{Bea00}, and hence we are only able to obtain a smaller family of Pfaffian quartic fourfolds by this method.
\end{rem}

Two proofs for the existence of Ulrich sheaves of rank $9$ on a general cubic sevenfold look quite different at the first glance. Manivel's approach is based on the fact that a general cubic sevenfold is contained in a bigger variety (as a linear section) which equips with an Ulrich sheaf of small rank. On the other hand, our approach is based on the fact that a general cubic sevenfold contains a smaller variety satisfying special syzygy conditions. Nevertheless, it seems to be that both ideas are strongly related, for instance, the Cartan cubic $\mathcal C \subset \mathbb{P}^{26}$ appears as a key object in both approaches. Hence, it is natural to observe how both ideas intersect in the geometry of the Cartan cubic. 

For the rest of the paper, we describe a way how to recover the Cartan cubic and to compute its matrix factorization. This also provides an explicit matrix factorization of the Cartan cubic, and of a general cubic sevenfold by restrictions. Recall that the existence of a Mukai threefold $Z$ of genus $7$ in a general cubic sevenfold $X$ was crucial in Theorem \ref{Prop:Rk9UlrichExistence} and its proof. Note that a Mukai threefold of genus $7$ in $\mathbb{P}^8$ is a linear section of the spinor tenfold $\mathcal S_{10} \subset \mathbb{P}^{15}$. Hence, it is natural to consider a bigger cubic hypersurface containing the spinor tenfold $\mathcal S_{10}$, and to compute its matrix factorization by Shamash's construction. 

Before to proceed, we briefly recall how the Cartan cubic and the spinor tenfold are related. First note that the Lie group $E_6$ acts on the $27$-dimensional vector space $V_{27}$. After taking the projection, there are only three orbits in $\mathbb{P}^{26}$:
\begin{enumerate}
\item the Cayley plane $\mathbb{OP}^2 = E_6 / P_1$, the only closed orbit which is the Severi variety of dimension $16$;
\item $\mathcal C \setminus \mathbb{OP}^2$, where $\mathcal C \subset \mathbb{P}^{26}$ is the Cartan cubic;
\item $\mathbb{P}^{26} \setminus \mathcal C$, the dense open orbit.
\end{enumerate}
Note that the Levi factor of $P_1$ is isomorphic to $\mathbb{C}^{*} \times Spin_{10}$, and $T_{[e]} E_6/P_1$ identifies with the $16$-dimensional spinor representation. After the projectivization, there is only one closed orbit, which is the spinor variety $\mathcal S_{10} \subset \mathbb{P}^{15}$ (cf. \cite[Section 2]{Tev03}). Note also that the Cartan cubic is the secant variety of the Cayley plane, and the Cayley plane is the singular locus of the Cartan cubic.

On the other hand, our computations follow the converse direction: we start from the spinor tenfold $\mathcal S_{10} \subset \mathbb{P}^{15}$, and we describe how to construct a cubic hypersurface in $\mathbb{P}^{26}$ whose singular locus is the Cayley plane $\mathbb{OP}^2$. Note that the only cubic hypersurface satisfying such a ``secant--singular locus'' relation with the Cayley plane $\mathbb{OP}^2$ is the Cartan cubic $\mathcal C$. We used a computer algebra system \emph{Macaulay2} \cite{GS} to make computations appear in the rest of the paper. We refer to the appendix, together with \emph{Macaulay2} scripts, for readers who are interested in the very details of these computations.

Let $\mathcal S_{10} \subset \mathbb{P}^{15}=\mathbb{P}W$ be the spinor tenfold, where $W \simeq \bigwedge^0 k^5 \oplus \bigwedge^2 k^5 \oplus \bigwedge^4 k^5$. Let $x_0, x_{ij}, y_i=x_{\{1, \ldots, 5\} \setminus \{i \}}$ $(1 \le i< j \le 5)$ be the natural coordinates of $\mathbb{P}W$. Let $P$ be the skew-symmetric matrix
\[
P=\begin{pmatrix}
0 & x_{12} & x_{13} & x_{14} & x_{15} \\
-x_{12} & 0 & x_{23} & x_{24} & x_{25} \\
-x_{13} & -x_{23} & 0 & x_{34} & x_{35} \\
-x_{14} & -x_{24} & -x_{34} & 0 & x_{45} \\
-x_{15} & -x_{25} & -x_{35} & -x_{45} & 0 \\
\end{pmatrix}
.
\]
The following $10$ quadratic equations $q_1, \cdots, q_5, q_1^{\prime}, \cdots, q_5^{\prime}$ 
\begin{eqnarray*}
q_i & = & x_0y_i + (-1)^{i-1}  \pf(P,i) \\
q_i^{\prime} & = & \left(\begin{array}{ccccc} y_1 & y_2 & y_3 & y_4 & y_5 \end{array}\right) P_i
\end{eqnarray*}
generate $\mathcal S_{10}$, where $\pf(P,i)$ is the Pfaffian of the $(4 \times 4)$ matrix obtained by deleting the $i$-th row and column from $P$, and $P_i$ is the $i$-th column vector of $P$ \cite{Muk95}. As discussed above, the Betti table of $\mathcal S_{10}$ is
\[
\begin{array}{cccccc}
1 & - & - & - & - & - \\
- & 10 & 16 & - & - & - \\
- & - & - & 16 & 10 & - \\
- & - & - & - & - & 1
\end{array}
\]
so that $\mathcal S_{10}$ is arithmetically Gorenstein of degree $12$ and codimension $5$. Since we are interested in cubic hypersurfaces containing (a cone over) $\mathcal S_{10}$, we put $10$ extra variables $a_1, \cdots, a_5, b_1, \cdots, b_5$ of degree $1$, which correspond to $10$ quadric generators $q_i^{\prime}, q_i$ of $\mathcal S_{10}$. Consider the following universal cubic form
\[
F = \sum_{i=1}^{5} (q_i a_i + q_i^{\prime} b_i)
\]
defined in 26 variables $\mathbf{x,y,a,b}$. It is clear that the hypersurface $V(F) \subset \mathbb{P}^{25}$ contains a cone over $\mathcal S_{10}$, and hence Shamash's construction for this pair will provide a matrix factorization of $F$ by a $27 \times 27$ linear matrix $M_F$, as in the proof of Theorem \ref{Prop:Rk9UlrichExistence}. After taking suitable permutations of rows/columns, and multiplications on rows/columns by a nonzero scalar, which do not change the determinant up to constant multiples, we obtain the symmetric $(27 \times 27)$ matrix $M_F$ which induces a matrix factorization of $F$ as follows.
\[
\setlength{\arraycolsep}{1pt}{
\scalemath{0.6}{
{\left({\begin{array}{ccccccccccccccccccccccccccc}
       \cdot&\cdot&\cdot&\cdot&\cdot&\cdot&{-{y}_{2}}&{-{y}_{3}}&\cdot&{-{y}_{4}}&\cdot&\cdot&{-{y}_{5}}&\cdot&\cdot&\cdot&\cdot&\cdot&\cdot&\cdot&\cdot&\cdot&{-{x}_{12}}&{-{x}_{13}}&{-{x}_{14}}&{-{x}_{15}}&{-{a}_{1}}\\
       \cdot&\cdot&\cdot&\cdot&\cdot&\cdot&{y}_{1}&\cdot&{-{y}_{3}}&\cdot&{-{y}_{4}}&\cdot&\cdot&{-{y}_{5}}&\cdot&\cdot&\cdot&\cdot&\cdot&\cdot&\cdot&{x}_{12}&\cdot&{-{x}_{23}}&{-{x}_{24}}&{-{x}_{25}}&{-{a}_{2}}\\
       \cdot&\cdot&\cdot&\cdot&\cdot&\cdot&\cdot&{y}_{1}&{y}_{2}&\cdot&\cdot&{-{y}_{4}}&\cdot&\cdot&{-{y}_{5}}&\cdot&\cdot&\cdot&\cdot&\cdot&\cdot&{x}_{13}&{x}_{23}&\cdot&{-{x}_{34}}&{-{x}_{35}}&{-{a}_{3}}\\
       \cdot&\cdot&\cdot&\cdot&\cdot&\cdot&\cdot&\cdot&\cdot&{y}_{1}&{y}_{2}&{y}_{3}&\cdot&\cdot&\cdot&{-{y}_{5}}&\cdot&\cdot&\cdot&\cdot&\cdot&{x}_{14}&{x}_{24}&{x}_{34}&\cdot&{-{x}_{45}}&{-{a}_{4}}\\
       \cdot&\cdot&\cdot&\cdot&\cdot&\cdot&\cdot&\cdot&\cdot&\cdot&\cdot&\cdot&{y}_{1}&{y}_{2}&{y}_{3}&{y}_{4}&\cdot&\cdot&\cdot&\cdot&\cdot&{x}_{15}&{x}_{25}&{x}_{35}&{x}_{45}&\cdot&{-{a}_{5}}\\
       \cdot&\cdot&\cdot&\cdot&\cdot&\cdot&\cdot&\cdot&\cdot&\cdot&\cdot&\cdot&\cdot&\cdot&\cdot&\cdot&{y}_{1}&{y}_{2}&{y}_{3}&{y}_{4}&{y}_{5}&{a}_{1}&{a}_{2}&{a}_{3}&{a}_{4}&{a}_{5}&\cdot\\
       {-{y}_{2}}&{y}_{1}&\cdot&\cdot&\cdot&\cdot&\cdot&\cdot&\cdot&\cdot&\cdot&{-{a}_{5}}&\cdot&\cdot&{a}_{4}&{-{a}_{3}}&\cdot&\cdot&{-{x}_{45}}&{x}_{35}&{-{x}_{34}}&{b}_{2}&{-{b}_{1}}&\cdot&\cdot&\cdot&\cdot\\
       {-{y}_{3}}&\cdot&{y}_{1}&\cdot&\cdot&\cdot&\cdot&\cdot&\cdot&\cdot&{a}_{5}&\cdot&\cdot&{-{a}_{4}}&\cdot&{a}_{2}&\cdot&{x}_{45}&\cdot&{-{x}_{25}}&{x}_{24}&{b}_{3}&\cdot&{-{b}_{1}}&\cdot&\cdot&\cdot\\
       \cdot&{-{y}_{3}}&{y}_{2}&\cdot&\cdot&\cdot&\cdot&\cdot&\cdot&{-{a}_{5}}&\cdot&\cdot&{a}_{4}&\cdot&\cdot&{-{a}_{1}}&{-{x}_{45}}&\cdot&\cdot&{x}_{15}&{-{x}_{14}}&\cdot&{b}_{3}&{-{b}_{2}}&\cdot&\cdot&\cdot\\
       {-{y}_{4}}&\cdot&\cdot&{y}_{1}&\cdot&\cdot&\cdot&\cdot&{-{a}_{5}}&\cdot&\cdot&\cdot&\cdot&{a}_{3}&{-{a}_{2}}&\cdot&\cdot&{-{x}_{35}}&{x}_{25}&\cdot&{-{x}_{23}}&{b}_{4}&\cdot&\cdot&{-{b}_{1}}&\cdot&\cdot\\
       \cdot&{-{y}_{4}}&\cdot&{y}_{2}&\cdot&\cdot&\cdot&{a}_{5}&\cdot&\cdot&\cdot&\cdot&{-{a}_{3}}&\cdot&{a}_{1}&\cdot&{x}_{35}&\cdot&{-{x}_{15}}&\cdot&{x}_{13}&\cdot&{b}_{4}&\cdot&{-{b}_{2}}&\cdot&\cdot\\
       \cdot&\cdot&{-{y}_{4}}&{y}_{3}&\cdot&\cdot&{-{a}_{5}}&\cdot&\cdot&\cdot&\cdot&\cdot&{a}_{2}&{-{a}_{1}}&\cdot&\cdot&{-{x}_{25}}&{x}_{15}&\cdot&\cdot&{-{x}_{12}}&\cdot&\cdot&{b}_{4}&{-{b}_{3}}&\cdot&\cdot\\
       {-{y}_{5}}&\cdot&\cdot&\cdot&{y}_{1}&\cdot&\cdot&\cdot&{a}_{4}&\cdot&{-{a}_{3}}&{a}_{2}&\cdot&\cdot&\cdot&\cdot&\cdot&{x}_{34}&{-{x}_{24}}&{x}_{23}&\cdot&{b}_{5}&\cdot&\cdot&\cdot&{-{b}_{1}}&\cdot\\
       \cdot&{-{y}_{5}}&\cdot&\cdot&{y}_{2}&\cdot&\cdot&{-{a}_{4}}&\cdot&{a}_{3}&\cdot&{-{a}_{1}}&\cdot&\cdot&\cdot&\cdot&{-{x}_{34}}&\cdot&{x}_{14}&{-{x}_{13}}&\cdot&\cdot&{b}_{5}&\cdot&\cdot&{-{b}_{2}}&\cdot\\
       \cdot&\cdot&{-{y}_{5}}&\cdot&{y}_{3}&\cdot&{a}_{4}&\cdot&\cdot&{-{a}_{2}}&{a}_{1}&\cdot&\cdot&\cdot&\cdot&\cdot&{x}_{24}&{-{x}_{14}}&\cdot&{x}_{12}&\cdot&\cdot&\cdot&{b}_{5}&\cdot&{-{b}_{3}}&\cdot\\
       \cdot&\cdot&\cdot&{-{y}_{5}}&{y}_{4}&\cdot&{-{a}_{3}}&{a}_{2}&{-{a}_{1}}&\cdot&\cdot&\cdot&\cdot&\cdot&\cdot&\cdot&{-{x}_{23}}&{x}_{13}&{-{x}_{12}}&\cdot&\cdot&\cdot&\cdot&\cdot&{b}_{5}&{-{b}_{4}}&\cdot\\
       \cdot&\cdot&\cdot&\cdot&\cdot&{y}_{1}&\cdot&\cdot&{-{x}_{45}}&\cdot&{x}_{35}&{-{x}_{25}}&\cdot&{-{x}_{34}}&{x}_{24}&{-{x}_{23}}&\cdot&\cdot&\cdot&\cdot&\cdot&{x}_{0}&\cdot&\cdot&\cdot&\cdot&{-{b}_{1}}\\
       \cdot&\cdot&\cdot&\cdot&\cdot&{y}_{2}&\cdot&{x}_{45}&\cdot&{-{x}_{35}}&\cdot&{x}_{15}&{x}_{34}&\cdot&{-{x}_{14}}&{x}_{13}&\cdot&\cdot&\cdot&\cdot&\cdot&\cdot&{x}_{0}&\cdot&\cdot&\cdot&{-{b}_{2}}\\
       \cdot&\cdot&\cdot&\cdot&\cdot&{y}_{3}&{-{x}_{45}}&\cdot&\cdot&{x}_{25}&{-{x}_{15}}&\cdot&{-{x}_{24}}&{x}_{14}&\cdot&{-{x}_{12}}&\cdot&\cdot&\cdot&\cdot&\cdot&\cdot&\cdot&{x}_{0}&\cdot&\cdot&{-{b}_{3}}\\
       \cdot&\cdot&\cdot&\cdot&\cdot&{y}_{4}&{x}_{35}&{-{x}_{25}}&{x}_{15}&\cdot&\cdot&\cdot&{x}_{23}&{-{x}_{13}}&{x}_{12}&\cdot&\cdot&\cdot&\cdot&\cdot&\cdot&\cdot&\cdot&\cdot&{x}_{0}&\cdot&{-{b}_{4}}\\
       \cdot&\cdot&\cdot&\cdot&\cdot&{y}_{5}&{-{x}_{34}}&{x}_{24}&{-{x}_{14}}&{-{x}_{23}}&{x}_{13}&{-{x}_{12}}&\cdot&\cdot&\cdot&\cdot&\cdot&\cdot&\cdot&\cdot&\cdot&\cdot&\cdot&\cdot&\cdot&{x}_{0}&{-{b}_{5}}\\
       \cdot&{x}_{12}&{x}_{13}&{x}_{14}&{x}_{15}&{a}_{1}&{b}_{2}&{b}_{3}&\cdot&{b}_{4}&\cdot&\cdot&{b}_{5}&\cdot&\cdot&\cdot&{x}_{0}&\cdot&\cdot&\cdot&\cdot&\cdot&\cdot&\cdot&\cdot&\cdot&\cdot\\
       {-{x}_{12}}&\cdot&{x}_{23}&{x}_{24}&{x}_{25}&{a}_{2}&{-{b}_{1}}&\cdot&{b}_{3}&\cdot&{b}_{4}&\cdot&\cdot&{b}_{5}&\cdot&\cdot&\cdot&{x}_{0}&\cdot&\cdot&\cdot&\cdot&\cdot&\cdot&\cdot&\cdot&\cdot\\
       {-{x}_{13}}&{-{x}_{23}}&\cdot&{x}_{34}&{x}_{35}&{a}_{3}&\cdot&{-{b}_{1}}&{-{b}_{2}}&\cdot&\cdot&{b}_{4}&\cdot&\cdot&{b}_{5}&\cdot&\cdot&\cdot&{x}_{0}&\cdot&\cdot&\cdot&\cdot&\cdot&\cdot&\cdot&\cdot\\
       {-{x}_{14}}&{-{x}_{24}}&{-{x}_{34}}&\cdot&{x}_{45}&{a}_{4}&\cdot&\cdot&\cdot&{-{b}_{1}}&{-{b}_{2}}&{-{b}_{3}}&\cdot&\cdot&\cdot&{b}_{5}&\cdot&\cdot&\cdot&{x}_{0}&\cdot&\cdot&\cdot&\cdot&\cdot&\cdot&\cdot\\
       {-{x}_{15}}&{-{x}_{25}}&{-{x}_{35}}&{-{x}_{45}}&\cdot&{a}_{5}&\cdot&\cdot&\cdot&\cdot&\cdot&\cdot&{-{b}_{1}}&{-{b}_{2}}&{-{b}_{3}}&{-{b}_{4}}&\cdot&\cdot&\cdot&\cdot&{x}_{0}&\cdot&\cdot&\cdot&\cdot&\cdot&\cdot\\
       {-{a}_{1}}&{-{a}_{2}}&{-{a}_{3}}&{-{a}_{4}}&{-{a}_{5}}&\cdot&\cdot&\cdot&\cdot&\cdot&\cdot&\cdot&\cdot&\cdot&\cdot&\cdot&{-{b}_{1}}&{-{b}_{2}}&{-{b}_{3}}&{-{b}_{4}}&{-{b}_{5}}&\cdot&\cdot&\cdot&\cdot&\cdot&\cdot\\
       \end{array}}\right)}}
}
\]

The cubic form $F$ does not define the Cartan cubic $\mathcal C$ at the moment, since the minimal irreducible representation $V_{27}$ of $E_6$ is $27$-dimensional whereas there are only $26$ variables in our coordinate ring. To study the differences between the coordinates of $V_{27}$, we take the Jacobian ideal $J(F)$ and its generators, namely, $26$ partial derivatives of $F$ which define the singular locus $\sing(V(F))$ of $V(F) \subset \mathbb{P}^{25}$. The Betti table of $\sing (V(F))$ is given by
\[
\setlength{\arraycolsep}{3pt}{
\scalemath{0.9}{
\begin{array}{ccccccccccccccccc}
1&-&-&-&-&-&-&-&-&-&-&-&-&-&-&-&-\\
-&26&62&-&-&-&-&-&-&-&-&-&-&-&-&-&-\\
-&-&120&911&2470&4719&8008&11440&12870&11440&8008&4368&1820&560&120&16&1\\
-&-&-&-&-&351&650&351&-&-&-&-&-&-&-&-&-\\
-&-&-&-&-&-&-&-&78&27&-&-&-&-&-&-&-\\
-&-&-&-&-&-&-&-&-&-&1&-&-&-&-&-&-
\end{array}
}
}
\],
whereas the Betti table of the Cayley plane $\mathbb{OP}^2  = \sing (\mathcal C) \subset \mathbb{P}^{26}$ is given by
\[
\setlength{\arraycolsep}{3pt}{
\scalemath{0.9}{
\begin{array}{ccccccccccc}
1&-&-&-&-&-&-&-&-&-&-\\
-&27&78&-&-&-&-&-&-&-&-\\
-&-&-&351&650&351&-&-&-&-&-\\
-&-&-&-&-&351&650&351&-&-&-\\
-&-&-&-&-&-&-&-&78&27&-\\
-&-&-&-&-&-&-&-&-&-&1
\end{array}
}
}
\]
We may wildly guess that both singular loci are closely related, due to similar shapes of their Betti tables. Indeed, $\sing(V(F))$ contains an embedded component $\Lambda \cong \mathbb{P}^9$, a linear subspace in $\mathbb{P}^{25}$ cut out by the 16 variables $\mathbf{x, y}$ which are coordinates for $\mathcal{S}_{10} \subset \mathbb{P}^{15}$. The difference of two Betti tables comes from $\Lambda$, so that the Koszul relations of $\Lambda$ appear in the quadratic strand (= $3$rd row) of the table. After detaching $\Lambda$ from $\sing (V(F))$, one can check that the Betti table of the subscheme defined by the ideal $(J(F):I(\Lambda))$ has exactly the same shape as the one of $\mathbb{OP}^2 \subset \mathbb{P}^{26}$. In particular, the ideal $(J(F):I(\Lambda))$ contains $1$ more independent quadric which does not appear as quadric generators of $J(F)$. We compute this ``missing quadric'', which is given by 
\[
a_1b_1 + a_2b_2 + a_3b_3 + a_4b_4 + a_5b_5 \in (J(F):I(\Lambda))_2.
\]
Since $J(F)$ does not contain such an element, it is natural to enlarge the Jacobian ideal so that a new ideal should contain the above quadric. We take  the simplest way; put one more variable $w$, and define  $F_{\mathcal C} := F - w \cdot \left( \sum_{i=1}^5 a_i b_i \right)$ so that the above quadric appears as $- \frac{\partial F_{\mathcal C}}{\partial w}$. Note that the hypersurface $V(F)$ is a (special) linear section of this new cubic hypersurface $V(F_{\mathcal C})$, which might behave much nicer since the  Jacobian ideal $J(F_{\mathcal C})$ no more contains an embedded component.

Let us have a closer analysis on the $27$ quadrics obtained as partial derivatives of $F_{\mathcal C}$. Note first that partial derivatives of $F_{\mathcal C}$ with respect to the variables $a_i, b_i$ are $q_i( \mathbf{x, y}) - b_i w$ and $q_i^{\prime}(\mathbf{x,y}) - a_i w$, where $q_i, q_i^{\prime}$ are the quadric generators of the spinor variety $\mathcal S_{10}$. Since each $a_i$ corresponds to $q_i^{\prime}$ (and $b_i$ corresponds to $q_i$), they can be understood as the homogenizations of these correspondences. Also note that $16$ partial derivatives with respect to $x_0, x_{ij}, y_i$ correspond to a linear syzygy among the quadrics $q_1, \cdots, q_5, q_1^{\prime}, \cdots, q_5^{\prime}$. For instance, a linear syzygy $x_0q_1^{\prime} + x_{12}q_2+x_{13}q_3+x_{14}q_4+x_{15}q_5=0$ corresponds to the quadric
\[
x_0a_1 + x_{12}b_2+x_{13}b_3+x_{14}b_4+x_{15}b_5 = \frac{\partial F_{\mathcal C}}{\partial y_1},
\]
via substituting $q_i$ by $b_i$, and $q_i^{\prime}$ by $a_i$. Hence, it sounds natural that the partial derivatives of $F_{\mathcal C}$ are ``relations''  of total degree $2$ which are derived from $\mathcal S_{10}$. Finally, via the backward-substitution $a_i \mapsto q_i^{\prime}$ and $b_i \mapsto q_i$, the quadric $\frac{\partial F_{\mathcal C}}{\partial{w}} = - \sum_{i=1}^5 a_i b_i$ corresponds to $-\sum_{i=1}^5 q_i q_i^{\prime}$, which is identically zero since $P$ is skew-symmetric. 

To sum up, we lead to the description of the hypersurfaces $V(F), V(F_{\mathcal C})$, and their singular loci as follows. 

\begin{thm}\label{prop:RecoverCartanCubic}
Let $F_{\mathcal C} := F - \left(\sum_{i=1}^5 a_i b_i \right) w$ as above. The hypersurface $V(F_{\mathcal C})$ is the Cartan cubic hypersurface $\mathcal C$ in $\mathbb{P}^{26}$, and the hypersurface $V(F) \subset V(w) = \mathbb{P}^{25}$ is its hyperplane section. The singular locus of $V(F)$ is the union of a hyperplane section of the Cayley plane $\mathbb{OP}^2 \subset \mathbb{P}^{26}$ and an embedded component $\Lambda \cong \mathbb{P}^9$.
\end{thm}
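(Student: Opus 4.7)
The plan breaks into three pieces matching the three claims. The hyperplane-section assertion is immediate since $F_{\mathcal C}|_{w=0} = F$, so $V(F) = V(F_{\mathcal C}) \cap \{w=0\}$. For the identification of $V(F_{\mathcal C})$ with the Cartan cubic, my plan is to compute $\sing V(F_{\mathcal C})$ and match it with the Cayley plane $\mathbb{OP}^2 \subset \mathbb{P}^{26}$; by the uniqueness of cubic hypersurfaces in $\mathbb{P}^{26}$ bearing $\mathbb{OP}^2$ as their singular locus (recalled just above), this forces $V(F_{\mathcal C}) = \mathcal C$. The description of $\sing V(F)$ will then fall out as the hyperplane section of $\sing V(F_{\mathcal C})$ together with an embedded linear component $\Lambda \cong \mathbb{P}^9$ coming from the single quadric that is present in $J(F_{\mathcal C})$ but missing from $J(F)$.

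The core computation is to read the 27 partial derivatives of $F_{\mathcal C}$ as compiling the full quadratic data of the spinor tenfold. Ten of them, $\frac{\partial F_{\mathcal C}}{\partial a_i} = q_i - w b_i$ and $\frac{\partial F_{\mathcal C}}{\partial b_i} = q_i^{\prime} - w a_i$, are the $w$-homogenizations of the Mukai quadrics generating $I(\mathcal S_{10})$. Sixteen of them, the partials with respect to $x_0, x_{ij}, y_i$, correspond under the substitution $q_i \leftrightarrow b_i,\ q_i^{\prime} \leftrightarrow a_i$ to the 16 linear first syzygies on those Mukai quadrics (i.e.\ the second row of the Betti table of $\mathcal S_{10}$). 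The last one, $\frac{\partial F_{\mathcal C}}{\partial w} = -\sum_i a_i b_i$, corresponds under the same substitution to the quadratic relation $\sum_i q_i q_i^{\prime} \equiv 0$ forced by the skew-symmetry of $P$. I would then verify directly that the subscheme defined by these 27 quadrics is irreducible, smooth, 16-dimensional, of degree 12, and has the displayed Betti table of $\mathbb{OP}^2$; combined with the $Spin_{10}$-equivariance built into the construction, this identifies the subscheme with the Cayley plane.

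For $\sing V(F)$, the ideal $J(F)$ consists of the 26 partials above specialized to $w=0$ (the partial $\partial F_{\mathcal C}/\partial w$ is the one that disappears). The inclusion $\Lambda = V(x_0, x_{ij}, y_i) \cong \mathbb{P}^9 \subset V(J(F))$ is clear: the 10 Mukai quadrics $q_i, q_i^{\prime}$ are quadratic in $(\mathbf{x},\mathbf{y})$ and vanish on $\Lambda$, while each of the 16 syzygy-derived quadrics is a sum of monomials of shape (linear in $\mathbf{x},\mathbf{y}$)(linear in $\mathbf{a},\mathbf{b}$) and also vanishes on $\Lambda$. Forming $(J(F) : I(\Lambda))$ introduces a single new degree-$2$ generator $\sum_i a_i b_i$, after which the resulting 27 quadrics agree with the partials of $F_{\mathcal C}$ specialized to $w = 0$. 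The residual component of $\sing V(F)$ is therefore the hyperplane section of $\mathbb{OP}^2$ by $\{w=0\}$, as claimed.

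The main obstacle I anticipate is the rigorous identification of the subscheme defined by the 27 partials of $F_{\mathcal C}$ with $\mathbb{OP}^2$ itself rather than merely a $16$-fold sharing its numerical invariants. To close this gap I would exploit the $\mathbb{C}^{*} \times Spin_{10}$-equivariance of the construction (manifest because the three blocks of partials assemble into irreducible representations of the Levi factor of $P_1 \subset E_6$) together with the tangent-cone incidence $\mathcal S_{10} \hookrightarrow \mathbb{OP}^2$ recalled above; these together rigidify the identification up to $\PGL_{27}$-equivalence. The explicit Macaulay2 verification in the appendix would provide a backup check.
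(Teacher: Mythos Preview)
Your analysis of the 27 partial derivatives of $F_{\mathcal C}$ matches the paper's exactly, and your treatment of $\sing V(F)$ via the colon ideal $(J(F):I(\Lambda))$ is the mirror image of the paper's computation $(J(F):J(F_{\mathcal C})|_{w=0}) = (x_0,x_{ij},y_i)$; both are fine.

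The substantive difference is in how you identify $\sing V(F_{\mathcal C})$ with $\mathbb{OP}^2$. You propose to match numerical invariants (dimension, degree, Betti table) and then rigidify using $\mathbb{C}^*\times Spin_{10}$-equivariance, a step you yourself flag as the main obstacle. The paper bypasses this entirely: it observes that the 27 partials are precisely the quadrics in the linear system $|2H - \mathcal S_{10}|$ on the cone $\mathbb{P}^{16}$ over $\mathbb{P}^{15}$, so that $\sing V(F_{\mathcal C})$ is the closure of the image of the rational map
\[
\mathbb{P}^{16} \dashrightarrow \mathbb{P}^{26}, \qquad [\xi:Z] \mapsto [\,\xi Z : q_i'(\xi) : q_i(\xi) : Z^2\,],
\]
well-defined off $\mathcal S_{10} \subset V(Z)$. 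This exhibits the singular locus as a nondegenerate $16$-fold in $\mathbb{P}^{26}$ with deficient secant variety (it sits inside the cubic $V(F_{\mathcal C})$), and Zak's classification of Severi varieties \cite[Theorem~4.5]{Zak93} then forces it to be $\mathbb{OP}^2$. This is cleaner than your route: no Betti-table comparison, no equivariance argument, and the secant relation immediately gives $V(F_{\mathcal C}) = \mathrm{Sec}(\mathbb{OP}^2) = \mathcal C$.

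One factual slip: the Cayley plane $\mathbb{OP}^2 \subset \mathbb{P}^{26}$ has degree $78$, not $12$ (you have conflated it with $\mathcal S_{10}$, which does have degree $12$). This doesn't break your strategy, but it would have caused trouble had you actually run the numerical check.
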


\begin{proof}
Let $T=k[x_0, x_{ij}, y_i, a_i, b_i, w ; \ 1 \le i < j \le 5]$ be the polynomial ring in $27$ variables. We are interested in the role of these $27$ coordinates. We take the partial derivatives of $F_{\mathcal C}$, and observe the Jacobian ideal $J(F_{\mathcal C})$. First of all, $16$ of them obtained by taking partial derivatives with respect to $x_0, x_{ij}, y_i$ correspond to the 16 linear syzygies among the $10$ quadrics $q_1, \cdots, q_5, q_1^{\prime}, \cdots, q_5^{\prime}$ as described above. Next, $10$ of them obtained by taking partial derivatives with respect to $a_i, b_i$ are $q_i - b_i w, q_i^{\prime}-a_i w$. Finally, the partial derivative with respect to $w$ gives the last generator
\[
-(a_1 b_1 + a_2 b_2 + a_3 b_3 + a_4 b_4 + a_5 b_5) = \frac{\partial F_{\mathcal C}}{\partial w},
\] 
which corresponds to the quadratic relation $-\sum_{i=1}^5 q_i q_i^{\prime} = 0$. 

In particular, the singular locus of $V(F_\mathcal C)$ coincides with the closure of the image of $\mathbb{P}^{16}$ under the following rational map defined by the linear system of quadrics containing (the cone over) the spinor tenfold $\mathcal S_{10} \subset \mathbb{P}^{15}$ 
\begin{eqnarray*}
\mathbb{P}^{16} & \dashrightarrow & \mathbb{P}^{26} \\
(\xi,Z)=[ X_0 : X_{ij} : Y_i : Z ] & \mapsto & [X_0 Z : X_{ij} Z : Y_i Z : q_i^{\prime}(\xi) : q_{i}(\xi) : Z^2 ]
\end{eqnarray*}
which is well-defined outside of the spinor tenfold $\mathcal S_{10} = V(q_i, q_i^{\prime}, Z) \subset V(Z) = \mathbb{P}^{15}$. Such a variety must be the Severi variety $\mathbb{OP}^2 \subset \mathbb{P}^{26}$ of dimension $16$ \cite[Theorem 4.5]{Zak93}, and its secant variety is the cubic hypersurface $V(F_{\mathcal C})$, thus we conclude that the hypersurface $V(F_{\mathcal C})$ exactly coincides with the Cartan cubic $\mathcal C$.

The last statement follows from a simple computation. Since the Jacobian $J(F)$ is radical, it is straightforward that
\[
V(J(F_{\mathcal C}), w) \subset V(J( F_{\mathcal C}|_{(w=0)})) = V(J(F)) \subset V(w) = \mathbb{P}^{25},
\]
and the closure of the set difference is given by the ideal quotient
\[
(J(F) : J(F_{\mathcal C})|_{(w=0)}) = (x_0, x_{ij}, y_i )
\]
which defines an embedded component $\Lambda \cong \mathbb{P}^9 \subset V(w)=\mathbb{P}^{25}$ as desired.
\end{proof}
\begin{rem}\label{Rmk:CorrespondenceBetween27Lines}
Indeed, there is a beautiful correspondence between these 27 variables and the 27 lines on a smooth cubic, namely,
\[
\begin{array}{ccc}
x_0 & \mapsto & \{6 \} \\
b_i & \mapsto & \{ i \} \\
x_{ij} & \mapsto & \{i, j \} \\
a_i & \mapsto & \{i, 6 \} \\
y_i & \mapsto & \{i\}^c = \{1, \cdots , 6 \} \setminus \{ i \} \\
w & \mapsto & \{6\}^c = \{1, 2, 3, 4, 5 \} .
\end{array}
\]
Via this identification, one can check that $F_{\mathcal C}$ is a (signed) sum of $45$ cubic monomials corresponding to $45$ tritangent planes $(\{i\},\{j\}^c, \{i,j\}), (\{i_1, j_1\},\{i_2,j_2\},\{i_3,j_3\})$ of 27 lines (cf. \cite[Chapter 9]{Dol12}). This gives an alternative proof that our cubic $F_{\mathcal C}$ coincides with the Cartan cubic form, without passing through analysis of its singular locus. It is well known that the automorphism group of the Cartan cubic $\mathcal C = V(F_{\mathcal C})$ is $E_6$. 
\end{rem}

Note that there are 3 types for $27$ lines:
\begin{enumerate}
\item $\{i\} ~ (1 \le i \le 6)$, corresponds to the exceptional line $E_i$;
\item $\{i, j \} ~ (1 \le i < j \le 6)$, corresponds to the line $L-E_i-E_j$;
\item $\{j\}^c ~ (1 \le j \le 6)$, corresponds to the line $2L-\sum_{i=1}^6 E_i + E_j$.
\end{enumerate}
Since the Picard group of a smooth cubic surface is isomorphic to $\mathbb{Z}^7$, we give the $\mathbb{Z}^7$-grading on the $27$ variables in a natural way. Note that the above $45$ tritangent planes correspond to the triple of lines whose sum have multidegree $\{3,-1,-1,-1,-1,-1,-1\}$, that is, the multidegree of the anticanonical divisor $-K$ for a smooth cubic.

For convenience, we re-order the variables with respect to the $\mathbb{Z}^7$-grading, namely, into the following order:
\[
(b_1, \cdots, b_5, x_0, x_{12}, \cdots, x_{45}, a_1, \cdots, a_5, y_1, \cdots, y_5,w).
\]

Note that the Hessian matrix of the Cartan cubic $F_{\mathcal C}$ induces a matrix factorization of itself:
\[
\setlength{\arraycolsep}{1pt}{
\scalemath{0.6}{
{\left({\begin{array}{ccccccccccccccccccccccccccc}
       \cdot&\cdot&\cdot&\cdot&\cdot&\cdot&{-{y}_{2}}&{-{y}_{3}}&\cdot&{-{y}_{4}}&\cdot&\cdot&{-{y}_{5}}&\cdot&\cdot&\cdot&{-w}&\cdot&\cdot&\cdot&\cdot&\cdot&{-{x}_{12}}&{-{x}_{13}}&{-{x}_{14}}&{-{x}_{15}}&{-{a}_{1}}\\
       \cdot&\cdot&\cdot&\cdot&\cdot&\cdot&{y}_{1}&\cdot&{-{y}_{3}}&\cdot&{-{y}_{4}}&\cdot&\cdot&{-{y}_{5}}&\cdot&\cdot&\cdot&{-w}&\cdot&\cdot&\cdot&{x}_{12}&\cdot&{-{x}_{23}}&{-{x}_{24}}&{-{x}_{25}}&{-{a}_{2}}\\
       \cdot&\cdot&\cdot&\cdot&\cdot&\cdot&\cdot&{y}_{1}&{y}_{2}&\cdot&\cdot&{-{y}_{4}}&\cdot&\cdot&{-{y}_{5}}&\cdot&\cdot&\cdot&{-w}&\cdot&\cdot&{x}_{13}&{x}_{23}&\cdot&{-{x}_{34}}&{-{x}_{35}}&{-{a}_{3}}\\
       \cdot&\cdot&\cdot&\cdot&\cdot&\cdot&\cdot&\cdot&\cdot&{y}_{1}&{y}_{2}&{y}_{3}&\cdot&\cdot&\cdot&{-{y}_{5}}&\cdot&\cdot&\cdot&{-w}&\cdot&{x}_{14}&{x}_{24}&{x}_{34}&\cdot&{-{x}_{45}}&{-{a}_{4}}\\
       \cdot&\cdot&\cdot&\cdot&\cdot&\cdot&\cdot&\cdot&\cdot&\cdot&\cdot&\cdot&{y}_{1}&{y}_{2}&{y}_{3}&{y}_{4}&\cdot&\cdot&\cdot&\cdot&{-w}&{x}_{15}&{x}_{25}&{x}_{35}&{x}_{45}&\cdot&{-{a}_{5}}\\
       \cdot&\cdot&\cdot&\cdot&\cdot&\cdot&\cdot&\cdot&\cdot&\cdot&\cdot&\cdot&\cdot&\cdot&\cdot&\cdot&{y}_{1}&{y}_{2}&{y}_{3}&{y}_{4}&{y}_{5}&{a}_{1}&{a}_{2}&{a}_{3}&{a}_{4}&{a}_{5}&\cdot\\
       {-{y}_{2}}&{y}_{1}&\cdot&\cdot&\cdot&\cdot&\cdot&\cdot&\cdot&\cdot&\cdot&{-{a}_{5}}&\cdot&\cdot&{a}_{4}&{-{a}_{3}}&\cdot&\cdot&{-{x}_{45}}&{x}_{35}&{-{x}_{34}}&{b}_{2}&{-{b}_{1}}&\cdot&\cdot&\cdot&\cdot\\
       {-{y}_{3}}&\cdot&{y}_{1}&\cdot&\cdot&\cdot&\cdot&\cdot&\cdot&\cdot&{a}_{5}&\cdot&\cdot&{-{a}_{4}}&\cdot&{a}_{2}&\cdot&{x}_{45}&\cdot&{-{x}_{25}}&{x}_{24}&{b}_{3}&\cdot&{-{b}_{1}}&\cdot&\cdot&\cdot\\
       \cdot&{-{y}_{3}}&{y}_{2}&\cdot&\cdot&\cdot&\cdot&\cdot&\cdot&{-{a}_{5}}&\cdot&\cdot&{a}_{4}&\cdot&\cdot&{-{a}_{1}}&{-{x}_{45}}&\cdot&\cdot&{x}_{15}&{-{x}_{14}}&\cdot&{b}_{3}&{-{b}_{2}}&\cdot&\cdot&\cdot\\
       {-{y}_{4}}&\cdot&\cdot&{y}_{1}&\cdot&\cdot&\cdot&\cdot&{-{a}_{5}}&\cdot&\cdot&\cdot&\cdot&{a}_{3}&{-{a}_{2}}&\cdot&\cdot&{-{x}_{35}}&{x}_{25}&\cdot&{-{x}_{23}}&{b}_{4}&\cdot&\cdot&{-{b}_{1}}&\cdot&\cdot\\
       \cdot&{-{y}_{4}}&\cdot&{y}_{2}&\cdot&\cdot&\cdot&{a}_{5}&\cdot&\cdot&\cdot&\cdot&{-{a}_{3}}&\cdot&{a}_{1}&\cdot&{x}_{35}&\cdot&{-{x}_{15}}&\cdot&{x}_{13}&\cdot&{b}_{4}&\cdot&{-{b}_{2}}&\cdot&\cdot\\
       \cdot&\cdot&{-{y}_{4}}&{y}_{3}&\cdot&\cdot&{-{a}_{5}}&\cdot&\cdot&\cdot&\cdot&\cdot&{a}_{2}&{-{a}_{1}}&\cdot&\cdot&{-{x}_{25}}&{x}_{15}&\cdot&\cdot&{-{x}_{12}}&\cdot&\cdot&{b}_{4}&{-{b}_{3}}&\cdot&\cdot\\
       {-{y}_{5}}&\cdot&\cdot&\cdot&{y}_{1}&\cdot&\cdot&\cdot&{a}_{4}&\cdot&{-{a}_{3}}&{a}_{2}&\cdot&\cdot&\cdot&\cdot&\cdot&{x}_{34}&{-{x}_{24}}&{x}_{23}&\cdot&{b}_{5}&\cdot&\cdot&\cdot&{-{b}_{1}}&\cdot\\
       \cdot&{-{y}_{5}}&\cdot&\cdot&{y}_{2}&\cdot&\cdot&{-{a}_{4}}&\cdot&{a}_{3}&\cdot&{-{a}_{1}}&\cdot&\cdot&\cdot&\cdot&{-{x}_{34}}&\cdot&{x}_{14}&{-{x}_{13}}&\cdot&\cdot&{b}_{5}&\cdot&\cdot&{-{b}_{2}}&\cdot\\
       \cdot&\cdot&{-{y}_{5}}&\cdot&{y}_{3}&\cdot&{a}_{4}&\cdot&\cdot&{-{a}_{2}}&{a}_{1}&\cdot&\cdot&\cdot&\cdot&\cdot&{x}_{24}&{-{x}_{14}}&\cdot&{x}_{12}&\cdot&\cdot&\cdot&{b}_{5}&\cdot&{-{b}_{3}}&\cdot\\
       \cdot&\cdot&\cdot&{-{y}_{5}}&{y}_{4}&\cdot&{-{a}_{3}}&{a}_{2}&{-{a}_{1}}&\cdot&\cdot&\cdot&\cdot&\cdot&\cdot&\cdot&{-{x}_{23}}&{x}_{13}&{-{x}_{12}}&\cdot&\cdot&\cdot&\cdot&\cdot&{b}_{5}&{-{b}_{4}}&\cdot\\
       {-w}&\cdot&\cdot&\cdot&\cdot&{y}_{1}&\cdot&\cdot&{-{x}_{45}}&\cdot&{x}_{35}&{-{x}_{25}}&\cdot&{-{x}_{34}}&{x}_{24}&{-{x}_{23}}&\cdot&\cdot&\cdot&\cdot&\cdot&x_0&\cdot&\cdot&\cdot&\cdot&{-{b}_{1}}\\
       \cdot&{-w}&\cdot&\cdot&\cdot&{y}_{2}&\cdot&{x}_{45}&\cdot&{-{x}_{35}}&\cdot&{x}_{15}&{x}_{34}&\cdot&{-{x}_{14}}&{x}_{13}&\cdot&\cdot&\cdot&\cdot&\cdot&\cdot&x_0&\cdot&\cdot&\cdot&{-{b}_{2}}\\
       \cdot&\cdot&{-w}&\cdot&\cdot&{y}_{3}&{-{x}_{45}}&\cdot&\cdot&{x}_{25}&{-{x}_{15}}&\cdot&{-{x}_{24}}&{x}_{14}&\cdot&{-{x}_{12}}&\cdot&\cdot&\cdot&\cdot&\cdot&\cdot&\cdot&x_0&\cdot&\cdot&{-{b}_{3}}\\
       \cdot&\cdot&\cdot&{-w}&\cdot&{y}_{4}&{x}_{35}&{-{x}_{25}}&{x}_{15}&\cdot&\cdot&\cdot&{x}_{23}&{-{x}_{13}}&{x}_{12}&\cdot&\cdot&\cdot&\cdot&\cdot&\cdot&\cdot&\cdot&\cdot&x_0&\cdot&{-{b}_{4}}\\
       \cdot&\cdot&\cdot&\cdot&{-w}&{y}_{5}&{-{x}_{34}}&{x}_{24}&{-{x}_{14}}&{-{x}_{23}}&{x}_{13}&{-{x}_{12}}&\cdot&\cdot&\cdot&\cdot&\cdot&\cdot&\cdot&\cdot&\cdot&\cdot&\cdot&\cdot&\cdot&x_0&{-{b}_{5}}\\
       \cdot&{x}_{12}&{x}_{13}&{x}_{14}&{x}_{15}&{a}_{1}&{b}_{2}&{b}_{3}&\cdot&{b}_{4}&\cdot&\cdot&{b}_{5}&\cdot&\cdot&\cdot&x_0&\cdot&\cdot&\cdot&\cdot&\cdot&\cdot&\cdot&\cdot&\cdot&\cdot\\
       {-{x}_{12}}&\cdot&{x}_{23}&{x}_{24}&{x}_{25}&{a}_{2}&{-{b}_{1}}&\cdot&{b}_{3}&\cdot&{b}_{4}&\cdot&\cdot&{b}_{5}&\cdot&\cdot&\cdot&x_0&\cdot&\cdot&\cdot&\cdot&\cdot&\cdot&\cdot&\cdot&\cdot\\
       {-{x}_{13}}&{-{x}_{23}}&\cdot&{x}_{34}&{x}_{35}&{a}_{3}&\cdot&{-{b}_{1}}&{-{b}_{2}}&\cdot&\cdot&{b}_{4}&\cdot&\cdot&{b}_{5}&\cdot&\cdot&\cdot&x_0&\cdot&\cdot&\cdot&\cdot&\cdot&\cdot&\cdot&\cdot\\
       {-{x}_{14}}&{-{x}_{24}}&{-{x}_{34}}&\cdot&{x}_{45}&{a}_{4}&\cdot&\cdot&\cdot&{-{b}_{1}}&{-{b}_{2}}&{-{b}_{3}}&\cdot&\cdot&\cdot&{b}_{5}&\cdot&\cdot&\cdot&x_0&\cdot&\cdot&\cdot&\cdot&\cdot&\cdot&\cdot\\
       {-{x}_{15}}&{-{x}_{25}}&{-{x}_{35}}&{-{x}_{45}}&\cdot&{a}_{5}&\cdot&\cdot&\cdot&\cdot&\cdot&\cdot&{-{b}_{1}}&{-{b}_{2}}&{-{b}_{3}}&{-{b}_{4}}&\cdot&\cdot&\cdot&\cdot&x_0&\cdot&\cdot&\cdot&\cdot&\cdot&\cdot\\
       {-{a}_{1}}&{-{a}_{2}}&{-{a}_{3}}&{-{a}_{4}}&{-{a}_{5}}&\cdot&\cdot&\cdot&\cdot&\cdot&\cdot&\cdot&\cdot&\cdot&\cdot&\cdot&{-{b}_{1}}&{-{b}_{2}}&{-{b}_{3}}&{-{b}_{4}}&{-{b}_{5}}&\cdot&\cdot&\cdot&\cdot&\cdot&\cdot\\
       \end{array}}\right)}
}
}
\]
We denote this Hessian matrix of $F_{\mathcal C}$ by $\mathcal H(F_{\mathcal C})$. It is composed of block matrices, having a number of symmetries. For instance, the block at the lower-left (and also the upper-right) corner is a generic $6 \times 6$ skew-symmetric matrix (this is the reason why we subtract the term $w \left( \sum_{i=1}^5 a_i b_i \right)$ from $F$, which provides the correct signed sum). Note that the block in the middle is a symmetric $15 \times 15$  matrix, which coincide with the Hessian matrix of the Pfaffian of the upper-right generic $6 \times 6$ skew-symmetric matrix in $15$ variables $\{x_{ij}, a_i\}$. Also note that this Pfaffian defines a secant variety of the Severi variety $Gr(2,6) \subset \mathbb{P}^{14}$, and its Hessian (= the middle block of our matrix) is a matrix factorization of this cubic. See also \cite{Kim19} for more examples and classification of such cubics. 

 When we restrict $\mathcal H(F_{\mathcal C})$ on the hyperplane $V(w)$, two matrices $M_F$ and $\mathcal H(F_{\mathcal C})|_{w=0}$ exactly coincide. In particular, two matrices only differ by $10$ entries containing $-w$.

\begin{rem}
It is not very surprising that the Hessian matrix $\mathcal H(F_{\mathcal C})$ of the Cartan cubic form $F_{\mathcal C}$ induces a matrix factorization of itself. The group $E_6$ acts on the $27$-dimensional vector space $V_{27}$ so that the Cartan cubic form $F_{\mathcal C}$ is the unique irreducible invariant (up to constant multiples), hence a theorem of Ein and Shepherd-Barron \cite[Theorem 2.8]{ESB89} implies that there is a co-coordinate system $(s_0, \cdots, s_{26})$ on $V_{27}$ such that the gradient map
\[
\begin{array}{cccc}
\nabla F_{\mathcal C} : & \mathbb{P}^{26} & \dashrightarrow & \mathbb{P}^{26} \\
 &(s_0, \cdots, s_{26}) &  \mapsto & \left( t_0 = \frac{\partial F_{\mathcal C}}{\partial s_0}, \cdots, t_{26} = \frac{\partial F_{\mathcal C}}{\partial s_{26}}\right)
\end{array}
\]
is a Cremona involution, that is, $(\nabla F_{\mathcal C})^2 = id$. 

In fact, our coordinates $\mathbf{x,y,a,b},w$ is already normalized in this viewpoint; one can check that the gradient map $\nabla F_{\mathcal C}$ is a Cremona involution with respect to $\mathbf{x,y,a,b}, w$. For convenience, let us denote $s_0, \cdots, s_{26}$ for the coordinates $\mathbf{x,y,a,b},w$, and let $t_i$ be the partial derivative with respect to $s_i$. 

Then, $F_{\mathcal C}$ satisfies 
\begin{eqnarray*}
F_{\mathcal C}(t_0, \cdots, t_{26})  & = &  F_{\mathcal C}(s_0, \cdots, s_{26})^2, \\
\frac{\partial F_{\mathcal C}(t_0, \cdots, t_{26})}{\partial t_i} & = & s_i F_{\mathcal C}(s_0, \cdots, s_{26}).
\end{eqnarray*}
Applying the Euler formula to the second equality, we have
\[
\frac{\partial^2 F_{\mathcal C}(t_0, \cdots, t_{26})}{\partial t_i \partial t_j} = \left[\frac{1}{2} s_i s_j +  F_{\mathcal C}(s_0, \cdots, s_{26} ) \frac{\partial s_i}{\partial t_j} \right].
\]
The left-hand-side is a linear form in $t_0, \cdots, t_{26}$, hence, it is a quadratic form in $s_0, \cdots, s_{26}$ via the substitution $t_i = \frac{\partial F_{\mathcal C}}{\partial s_i}$. In particular, the term $q_{ij} := F_{\mathcal C} (s_0, \cdots, s_{26}) \frac{\partial s_i}{\partial t_j}$ is  quadratic in $s_0, \cdots, s_{26}$ for each $i, j$. Let $\mathcal Q = (q_{ij})$ be the $27 \times 27$ matrix composed of these quadrics. Since the Hessian matrix $\mathcal H(F_{\mathcal C}) = \left( \frac{\partial t_i}{\partial s_j} \right)$ and $\mathcal Q$ satisfy $\mathcal H(F_{\mathcal C}) \mathcal Q = \mathcal Q \mathcal H(F_{\mathcal C}) = F_{\mathcal C} \cdot Id$, we conclude that $(\mathcal H(F_{\mathcal C}), \mathcal Q)$ is a matrix factorization of $F_{\mathcal C}$ by its Hessian. We refer to  \cite[Proposition 2.5]{IM14} and \cite[Example 2.1.8]{Abu18} for another explanation using the exceptional Jordan algebra of $\mathbb{O}$-Hermitian matrices. 
\end{rem}
 
\begin{ack}
The authors thank David Eisenbud for helpful discussions. This work was supported by Project I.6 of SFB-TRR 195 ``Symbolic Tools in Mathematics and their Application'' of the German Research Foundation (DFG). 
\end{ack}



\begin{thebibliography}{Bib}

\bibitem[Abo14]{Abo14}
H. Abo, \emph{Varieties of completely decomposable forms and their secants}, J. Algebra \textbf{403} (2014), 135--153

\bibitem[Abu18]{Abu18}
R. Abuaf, \emph{Hodge numbers and Hodge structures for Calabi-Yau categories of dimension three}, preprint (2018), available at: arXiv:1807.02867

\bibitem[BH87]{BH87}
J. Backelin and J. Herzog, \emph{On Ulrich-modules over hypersurface ring}, in: Commutative Algebra (Berkeley, 1987), Math. Sci. Res. Inst. Publ. \textbf{15} (1989), Springer, New York, 63--68

\bibitem[Bea00]{Bea00}
A. Beauville, \emph{Determinantal hypersurfaces}, Mich. Math. J. \textbf{48} (2000), 39--64

\bibitem[BEH87]{BEH87}
R.-O. Buchweitz, D. Eisenbud, and J. Herzog, \emph{Cohen-Macaulay modules on quadrics}, in: Singularities, representation of algebras, and vector bundles (Lambrecht, 1985), in: Lecture Notes in Mathematics \textbf{1273} (1987), 58--116.

\bibitem[BES17]{BES17}
M. Bl{\"a}ser, D. Eisenbud, and F.-O. Schreyer, \emph{Ulrich complexity}, Differential Geom. Appl. \textbf{55} (2017), 128--145

\bibitem[CH12]{CH12}
M. Casanellas and R. Hartshorne (with an appendix by F. Geiss and F.-O. Schreyer), \emph{Stable {U}lrich bundles}, Int. J. Math. \textbf{23} (2012),  1250083

\bibitem[Chi78]{Chi78}
L. N. Childs, \emph{Linearizing of $n$-ic forms and generalized Clifford algebras}, Linear Multilinear Algebra \textbf{5} (1978), 267--278

\bibitem[Dol12]{Dol12}
I. V. Dolgachev, \emph{Classical algebraic geometry}, Cambridge Univ. Press, Cambridge (2012)

\bibitem[ESB89]{ESB89}
L. Ein and N. Shepherd-Barron, \emph{Some special Cremona transformations}, Amer. J. Math. \textbf{111} (1989), 783--800

\bibitem[Eis80]{Eis80}
D. Eisenbud, \emph{Homological algebra on a complete intersection, with an application to group representations}, Trans. Amer. Math. Soc. \textbf{260} (1980), 35--64

\bibitem[ESW03]{ESW03}
D. Eisenbud and F.-O. Schreyer (with an appendix by J. Weyman), \emph{Resultants and Chow forms via exterior syzygies}, J. Amer. Math. Soc. \textbf{16} (2003), 537--579

\bibitem[ES11]{ES11}
D. Eisenbud and F.-O. Schreyer, \emph{Boij-S{\"o}derberg theory}, Combinatorial aspects of commutative algebra and algebraic geometry, Abel Symp. \textbf{6}, Springer, Berlin (2011), 35--48

\bibitem [GS]{GS}
D. R. Grayson and M. E. Stillman, \emph{Macaulay2, a software system for research in algebraic geometry}, Available at: \url{http://www.math.uiuc.edu/Macaulay2/}

\bibitem[IM14]{IM14}
A. Iliev and L. Manivel, \emph{On cubic hypersurfaces of dimensions 7 and 8}, Proc. London Math. Soc. \textbf{108} (2014), 517--540

\bibitem[KL04]{KL04}
A. Kapustin and Y. Li, \emph{D-branes in Landau--Ginzburg models and algebraic geometry}, J. High Energy Phys. \textbf{JHEP12(2003)005} (2003)

\bibitem[Kim19]{Kim19}
Y. Kim, \emph{Cubic forms having matrix factorizations by Hessian matrices}, preprint (2019), available at: arXiv:1905.09628

\bibitem[Man19]{Man19}
L. Manivel, \emph{Ulrich and aCM bundles from invariant theory}, Comm. Algebra \textbf{47} (2019), 706--718

\bibitem[Muk95]{Muk95}
S. Mukai, \emph{Curves and symmetric spaces. I}, Amer. J. Math. \textbf{117} (1995), 1627--1644

\bibitem[Orl04]{Orl04}
D. O. Orlov, \emph{Triangulated categories of singularities and D-branes in Landau--Ginzburg models}, Tr. Mat. Inst. Steklova \textbf{246} (2004), 240--262 

\bibitem[Tev03]{Tev03}
E. A. Tevelev, \emph{Projectively dual varieties}, J. Math. Sci. (N. Y.) \textbf{117} (2003), 4585--4732

\bibitem[Ulr84]{Ulr84}
B. Ulrich, \emph{Gorenstein rings and modules with high numbers of generators}, Math. Z. \textbf{188} (1984), 23--32

\bibitem[Zak93]{Zak93}
F. L. Zak, \emph{Tangents and secants of algebraic varieties}, Translations of Mathematical Monographs \textbf{127} (1993), American Mathematical Society, Providence, RI
\end{thebibliography}
\def\cprime{$'$} \def\cprime{$'$} \def\cprime{$'$} \def\cprime{$'$}
  \def\cprime{$'$} \def\cprime{$'$} \def\dbar{\leavevmode\hbox to
  0pt{\hskip.2ex \accent"16\hss}d} \def\cprime{$'$} \def\cprime{$'$}
  \def\polhk#1{\setbox0=\hbox{#1}{\ooalign{\hidewidth
  \lower1.5ex\hbox{`}\hidewidth\crcr\unhbox0}}} \def\cprime{$'$}
  \def\cprime{$'$} \def\cprime{$'$} \def\cprime{$'$}
  \def\polhk#1{\setbox0=\hbox{#1}{\ooalign{\hidewidth
  \lower1.5ex\hbox{`}\hidewidth\crcr\unhbox0}}} \def\cdprime{$''$}
  \def\cprime{$'$} \def\cprime{$'$} \def\cprime{$'$} \def\cprime{$'$}
\providecommand{\bysame}{\leavevmode\hbox to3em{\hrulefill}\thinspace}
\providecommand{\MR}{\relax\ifhmode\unskip\space\fi MR }
\providecommand{\MRhref}[2]{%
  \href{http://www.ams.org/mathscinet-getitem?mr=#1}{#2}
}
\providecommand{\href}[2]{#2}

\vskip1cm

\section{Appendix: computer-based computations with \emph{Macaulay2} scripts}

We address \emph{Macaulay2} \cite{GS} scripts which we used throughout the paper with a few comments.

First, we define the spinor tenfold $\mathcal S_{10} \subset \mathbb{P}^{15}$. Since the generic $5 \times 5$ skew-symmetric linear matrix induce the quadric generators of $\mathcal S_{10}$, we set up as follows.

\begin{verbatim}
i1 : kk=QQ;
x=symbol x;
S=kk[x_0,apply(subsets(toList(1..5),2),ij->x_(10*ij_0+ij_1)),y_1..y_5];

X=matrix{{0,x_12,x_13,x_14,x_15},{0,0,x_23,x_24,x_25},
{0,0,0,x_34,x_35},{0,0,0,0,x_45},{0,0,0,0,0}};
X=map(S^5,,X-transpose X);
pf=mingens pfaffians(4,X);
A=matrix{apply(5,i->x_0*y_(i+1)-(-1)^i*pf_(0,i))};
B=matrix{apply(5,i->y_(i+1))}*X;
spin=ideal A+ideal B;
\end{verbatim}

Note that the entries of $A$, $B$ are generators of the ideal ``spin'' which defines $\mathcal S_{10}$. It is well-known that $\mathcal S_{10}$ has the desired Betti table to obtain an Ulrich sheaf of rank $9$ via Shamash's construction.

\begin{verbatim}
i2 : fspin=res spin; betti fspin

            0  1  2  3  4 5
o2 = total: 1 10 16 16 10 1
         0: 1  .  .  .  . .
         1: . 10 16  .  . .
         2: .  .  . 16 10 .
         3: .  .  .  .  . 1
\end{verbatim}

We put extra variables $a_1, \cdots, a_5, b_1, \cdots, b_5$ which corresponds to those $10$ quadric generators $q_1, \cdots, q_5, q_1^{\prime}, \cdots , q_5^{\prime}$, and consider the cubic $F = \sum a_i q_i + b_i q_i^{\prime}$. We apply Shamash's construction for the hypersurface defined by $F$ which contains a cone over $\mathcal S_{10}$. 

\begin{verbatim}
i3 : SExt=kk[gens S, a_1..a_5,b_1..b_5];
aa=matrix{apply(5,i->a_(i+1))};
bb=matrix{apply(5,i->b_(i+1))};
F=aa*sub(transpose A,SExt)+bb*sub(transpose B,SExt);
R=SExt/ideal F;
spinR=sub(spin,R);
fperiodic=res(spinR,LengthLimit=>6);
betti fperiodic

            0  1  2  3  4  5  6
o3 = total: 1 10 17 26 27 27 27
         0: 1  .  .  .  .  .  .
         1: . 10 17  .  .  .  .
         2: .  .  . 26 27  .  .
         3: .  .  .  .  . 27 27
\end{verbatim}

We check that the resolution over the hypersurface ring becomes $2$-periodic after $4$ steps, and the first linear matrix appears as $d_6$. Indeed, this gives a matrix factorization of $F$.

\begin{verbatim}
i4 : M=fperiodic.dd_6;
ann coker sub(M,SExt)==ideal F

o4 = true
\end{verbatim}

Due to computational issues (e.g. choice of basis), it is hard to observe that it coincides with the restriction of the Hessian matrix $\mathcal H(F_{\mathcal C})$ as mentioned above. We need to manipulate the matrix by a number of certain permutations of rows/columns, and a number of multiples by nonzero constants on rows/columns to obtain the matrix we seen above. To reduce the steps, we give a multigrade on each variable, and compute the same matrix over a multigraded polynomial ring as follows. 

First of all, we compute all the possible multigrading structures on the variables defining $F$ so that $F$ becomes homogeneous:
\begin{verbatim}
i5 : cubics=(entries(coefficients F)_0)_0;
varSExt=(entries vars SExt)_0;
incMatrix=matrix apply(cubics, t->(
   apply(varSExt,l->(if codim ideal (t,l)==1 then 1 else 0))));
rel=id_(ZZ^(#cubics-1))||matrix {apply(#cubics-1, i->(-1))}; 
stdGrading=matrix apply(#varSExt,i->{1});
possibleGradings=(gens ker((transpose rel)*incMatrix));
gradingLLL=LLL mingens image(stdGrading|(possibleGradings%stdGrading));
rank gradingLLL

o5 = 7
\end{verbatim}

Hence, the $26$ variables $\mathbf{x,y,a,b}$ admit a $\mathbb{Z}^7$-grading. Let us compare with the $\mathbb{Z}^7$-grading for lines on a smooth cubic surface, which is described in Remark $3.4$. Note that we drop the last multidegree $\{2,-1,-1,-1,-1,-1,0\}$ (corresponding to $w$) at the moment.

\begin{verbatim}
i6 : Es=apply(6,i->apply(7,j->if i==j-1 then 1 else 0));
Fij=apply(subsets(toList(0..5),2),ij->{1,0,0,0,0,0,0}-Es_(ij_0)-Es_(ij_1));
Gs=apply(6,i->{2,-1,-1,-1,-1,-1,-1}+Es_i);
grading=Es|Fij|Gs;
permGrading=(matrix grading)^{5,6..15,21..25,16..20,0..4};
image permGrading==image gradingLLL

o6 = true
\end{verbatim}

Hence, both gradings are equivalent, and hence we can plug in the $\mathbb{Z}^7$-grading as in Remark $3.4$. To fit with the above matrices, we select a certain permutation of rows/columns carefully:

\begin{verbatim}
i7 : 
loadPackage ("K3Carpets",Reload=>true)
Sall=kk[gens SExt,Degrees=>entries permGrading]
Fall=map(Sall^1,,substitute(F,Sall));
Rall=Sall/ideal sub(F,Sall);
FperAll=allGradings(fperiodic,Rall);
M=map(Sall^(-degrees FperAll_5),Sall^(-degrees FperAll_6),
   sub(FperAll.dd_6,Sall));

degsTargetM=degrees target M;
degsSourceM=degrees source M;

varOrder={5,4,3,2,1,0,6,7,11,8,12,15,9,13,
   16,18,10,14,17,19,20,26,25,24,23,22,21};
sortedTargetDegs=(sort degsTargetM)_varOrder;
sortedSourceDegs=(reverse sort degsSourceM)_varOrder;

blocks=apply(sortedTargetDegs,d->apply(sortedSourceDegs,e->(
	    L1= select(rank source M,i->degsSourceM_i==e);
	    L2= select(rank target M,i->degsTargetM_i==d);
	    M^L2_L1)));
netList blocks;
netList (BS=apply(blocks,b->apply(b,m->sub(m,Sall))));

M=map(Sall^0,Sall^27,0);
for i from 0 to 26 do (
    N=map(target BS_i_0,Sall^0,0);
    for j from 0 to 26 do (N=N|BS_i_j);
    M=M||N)
M
\end{verbatim}

The matrix $M$ looks much better, in particular, the basis are now well-placed with respect to the given multigrading structure. However, still there are issues on the choice of coefficients (even $M$ is not symmetric at the moment), and hence we need to take certain multiples on rows/columns.

Before a further correction to $M$, we first check how the universal cubic $F$ apart from the Cartan cubic $F_{\mathcal C}$. First note that the singular locus of $V(F)$, which is generated by $26$ quadrics, contains an embedded component $\Lambda$. When we take it off, then the remaining set is generated by $27$ quadrics, so there is one more quadric. We compute this extra quadric, which is $\sum_{i=1}^5 a_i b_i$:
\begin{verbatim}
i8 : IF=ideal F;
JacIF=saturate ideal jacobian IF;
embComponent=sub(ideal(gens S), SExt);
(numgens (JacIF:embComponent), numgens JacIF)==(27,26)

o8 = true

i9: extraQuadric=flatten entries(generators(JacIF:embComponent));
for i from 0 to 25 do extraQuadric=
   delete((flatten entries generators JacIF)_i,extraQuadric);
extraQuadric

o9 = {a b  + a b  + a b  + a b  + a b }
       1 1    2 2    3 3    4 4    5 5
\end{verbatim}

We compute the Betti table of the singular locus of $V(F)$, with and without the embedded component $\Lambda$. The computational cost is a bit high, we reduce it on a finite field and use the ``minimalBetti'' command. As result, we compute the Betti tables discussed in Section $3$.

\begin{verbatim}
i10 : p=nextPrime(10^3);
Sfin=ZZ/p[gens SExt];

time minimalBetti sub(JacIF,Sfin)
time minimalBetti sub((JacIF:embComponent),Sfin)
\end{verbatim}

It is natural to adjust $F$ slightly by putting a further extra variable ``$w$'' so that the partial derivative with respect to $w$ corresponds to this quadric. The result gives the Cartan cubic hypersurface in $\mathbb{P}^{26}$. 

\begin{verbatim}
i11 : T=kk[gens SExt, w];
FC=sub((flatten entries F)_0,T)-sum(apply(5, i->a_(i+1)*b_(i+1)*w));
IFC=ideal FC;
JacIFC=saturate ideal jacobian IFC;
\end{verbatim}

One can check that the singular locus of the hypersurface defined by $F$ is the union of the hyperplane section of the singular locus of the Cartan cubic (= Severi variety of dimension 16) and a linear subspace $\Lambda$ as an embedded component:

\begin{verbatim}
i12 : (JacIF : sub(JacIFC,SExt)) == embComponent

o12 = true
\end{verbatim}

We compute the Hessian matrix of the Cartan cubic, following the same order on variables as in Remark $3.4$.
\begin{verbatim}
i13 : permVarT=matrix({{b_1..b_5,x_0, x_12,x_13,x_23,x_14,x_24,x_34,x_15,
   x_25,x_35,x_45,a_1..a_5,y_1..y_5,w}});
HFC = (diff(permVarT,transpose diff(permVarT, FC)))
\end{verbatim}

Finally, we correct the matrix factorization $M$ of $F$ computed above, by solving the equation $D M = \mathcal H(F_{\mathcal C})|_{w=0} E$, where $D, E$ are diagonal matrices. As a result, we obtain a symmetric matrix $M^{\prime} = DME^{-1}$ which coincides with the restriction of the Hessian matrix $\mathcal H({F_{\mathcal C}})|_{w=0}$ ($=$ the matrix $M_F$ in Section $3$).

\begin{verbatim}
i14 : resHFC=sub(HFC,Sall);
SallDE=Sall[d_1..d_27,e_1..e_27];
listD=toList(d_1..d_27);
listE=toList(e_1..e_27);
D=diagonalMatrix(listD);
E=diagonalMatrix(listE);

relationsDE=unique flatten(apply(flatten entries vars SExt,i->unique(
   flatten entries diff(sub(i,SallDE),sub(D*M-resHFC*E, SallDE)))));
relDE=matrix(apply(relationsDE,i->{diff(vars SallDE,i)}));
solDE=(syz relDE);

solD=sub(sub(D,(transpose solDE)),Sall);
solE=sub(sub(E,(transpose solDE)),Sall);

Mprime=solD*M*inverse(solE);
(Mprime==transpose Mprime,Mprime==resHFC,ann coker Mprime==ideal Fall)

o14 = (true, true, true)

i15 : Mprime
\end{verbatim}

\vspace{1cm}

\end{document}